\setlist[enumerate]{label=$(\alph*)$, leftmargin = *}
\providecommand{\keywords}[1]
{
  {\small	
  \textbf{\textit{Keywords:}}} #1
}
\theoremstyle{plain}
\newtheorem{theorem}{Theorem}[section]
\newtheorem{corollary}[theorem]{Corollary}
\newtheorem{lemma}[theorem]{Lemma}
\newtheorem{proposition}[theorem]{Proposition}
\theoremstyle{definition}
\newtheorem{definition}[theorem]{Definition}
\newtheorem{example}[theorem]{Example}
\newcommand{\cV}{\mathcal V}
\newcommand{\NN}{{\mathbb N}}
\newcommand{\vmon}{{\cV\text{-}\bf Mon}}
\newcommand{\twomon}{{{\bf 2}\text{-}\bf Mon}}
\newcommand{\vcat}{{\cV\text{-}\bf Cat}}
\newcommand{\mon}{{\bf Mon}}
\newcommand{\two}{{\bf 2}}
\newcommand{\llex}{{{\rm lex}}}
\newcommand{\wlex}{{{\rm wlex}}}
\begin{document}
\title{On semidirect products of quantale enriched monoids}\date{} \author{C\'elia Borlido\footnote{Partially supported by the
    Centre for Mathematics of the University of Coimbra -
    UIDB/00324/2020, funded by the Portuguese Government through
    FCT/MCTES (doi: 10.54499/UIDB/00324/2020).}}\affil{Centre for
  Mathematics, University of
  Coimbra (CMUC)\\Department of Mathematics, 3001-501 Coimbra, Portugal \\~\\
  cborlido@mat.uc.pt}\maketitle
\begin{abstract}
  We consider monoids equipped with a compatible quantale valued
  relation, to which we call quantale enriched monoids, and study
  semidirect products of such structures. It is well-known that
  semidirect products of monoids are closely related to Schreier split
  extensions which, in the setting of monoids, play the role of split
  extensions of groups.  We will thus introduce certain split
  extensions of quantale enriched monoids, which generalize the
  classical Schreier split extensions of monoids, and investigate
  their connections with semidirect products. We then restrict our
  study to a class of quantale enriched monoids whose behavior mimics
  the fact that the preorder on a preordered group is completely
  determined by its cone of positive elements. Finally, we instantiate
  our results for preordered monoids and compare them with existing
  literature.
\end{abstract}

\keywords{quantale enriched monoid, preordered monoid, semidirect
  product, split extension}
\section{Introduction}

In the theory of groups, the study of split extensions plays an
important role as it allows for the decomposition of each group as a
semidirect product of each of its normal subgroups and corresponding
quotients.  In recent years, there had been generalizations of this
result in two different directions.

On the one hand, there were considered
in~\cite{ClementinoMartins-FerreiraMontoli19, ClementinoRuivo2023}
groups equipped with a preorder compatible with the multiplication
(but not with the inversion!) and split extensions of preordered
groups were characterized. Later, the theory was further generalized
to groups enriched in a quantale~$\cV$~\cite{ClementinoMontoli21}, the
so-called $\cV$-groups, thereby obtaining results that apply not only
to preordered groups but also to various structures such as
generalized (ultra)metric groups and probabilistic (ultra)metric
groups.

On the other hand, there were considered
in~\cite{Martins-FerreiraMontoliSobral13} split extensions of
monoids. In \emph{loc. cit.}, it was shown that not every split
extension of monoids would give rise to a semidirect product but one
should instead restrict to the so-called \emph{Schreier split
  extensions}. In the case where the structures at play are groups,
the concepts of Schreier split extension and of split extension
coincide. Once again, Schreier split extensions of monoids were later
explored in an enriched setting by considering monoids equipped with a
preorder compatible with the
multiplication~\cite{Martins-FerreiraSobral21}. We note however that
the results of~\cite{Martins-FerreiraSobral21} are not a
generalization of those
of~\cite{ClementinoMartins-FerreiraMontoli19,ClementinoRuivo2023,
  ClementinoMontoli21}. For one, the definition of \emph{Schreier
  split extension} of~\cite{Martins-FerreiraSobral21} is not a
generalization of the definition of \emph{split extension}
of~\cite{ClementinoMartins-FerreiraMontoli19,ClementinoRuivo2023} for
preordered groups (and thus, neither of~\cite{ClementinoMontoli21} for
$\cV$-groups). Moreover, the focus
of~\cite{ClementinoMartins-FerreiraMontoli19,ClementinoRuivo2023}
and~\cite{ClementinoMontoli21} is different of that
of~\cite{Martins-FerreiraSobral21}. While in the former the authors
characterize the preorders (respectively, $\cV$-group structures) on a
given semidirect product of groups that determine a split extension of
preodered groups (respectively, of $\cV$-groups), in the latter the
authors abstractly provide a characterization of (certain) Schreier
split extensions of preordered monoids without paying attention to the
preorders that may possibly occur.

In this paper we study split extensions of quantale-enriched monoids,
thereby generalizing both approaches at once. After briefly recalling,
in Section~\ref{sec:prel}, the most relevant concepts and results used
in the remaining paper, we introduce, in Section~\ref{sec:v-mon}, the
category of $\cV$-monoids, that is, of \emph{monoids enriched in a
  quantale $\cV$}. Split extensions of $\cV$-monoids are considered in
Section~\ref{sec:schreier} where we introduce the notion of
\emph{$U$-Schreier split extension}, for a suitable functor $U$, and
characterize the $\cV$-monoid structures that may possibly appear in
the semidirect product component of a $U$-Schreier split
extension. Section~\ref{sec:vmon-star} is then devoted to studying
those $\cV$-monoids whose quantale enrichment is determined by a
suitable analogue of the positive cone for preordered monoids, as it
happens for $\cV$-groups. Finally, in Section~\ref{sec:preorderd-mon},
we instantiate the results of~Section~\ref{sec:vmon-star} for
preordered groups and compare them with those
of~\cite{Martins-FerreiraSobral21}.

\section{Preliminaries}\label{sec:prel}
The reader is assumed to have some acquaintance with basic category
theory and semidirect products of monoids. With the aim of setting up
the notation, we recall the most relevant concepts that will be used
in the paper. For more on general category theory, including the
missing definitions, we refer to~\cite{MacLane1998}, for
$\cV$-categories to~\cite{HofmannSealTholen2014}, and for Schreier
split extensions and semidirect products of monoids
to~\cite{Martins-FerreiraMontoliSobral13}.
\subsection{$\cV$-categories}

A \emph{quantale} is a tuple $\cV = (V, \le, \otimes, k)$ such that
$(V, \le)$ is a complete lattice, $(V, \otimes, k)$ is a monoid, and
the following equalities hold:
\begin{equation}
  a \otimes (\bigvee_i b_i) = \bigvee_i{(a \otimes b_i)} \qquad
  \text{and} \qquad (\bigvee_i a_i) \otimes b = \bigvee_i(a_i \otimes
  b).\label{eq:7}
\end{equation}
The top and bottom elements of $(V, \leq)$ will be denoted by $\top$
and $\bot$, respectively.  Notice that~\eqref{eq:7} implies that
$\otimes$ is monotone with respect to the partial order on $V$, that
is, if $v_1 \le v_2$ and $w_1 \le w_2$, then $v_1\otimes w_1 \le v_2
\otimes w_2$. We say that $\cV$ is \emph{commutative} provided the
monoid $(V, \otimes, k)$ is commutative.  The quantale $\two = (\{0,
1\}, \leq, \wedge, 1)$, where $0 \leq 1$, will be important in this
work, namely when considering preordered monoids on
Section~\ref{sec:preorderd-mon}.

Let $\cV$ be a quantale, and let $X$, $Y$ be two sets. A
\emph{$\cV$-relation} from $X$ to $Y$ is a function $a: X \times Y \to
V$.
A \emph{$\cV$-category} is a pair $(X, a)$, where $X$ is a set and $a$
is a reflexive and transitive $\cV$-relation from $X$ to $X$, that is,
for every $x, y, z \in X$, we have
\begin{itemize}
\item $k \le a(x,x)$ (reflexivity);
\item $a(x,y) \otimes a(y, z) \le a(x, z)$ (transitivity).
\end{itemize}
A \emph{$\cV$-functor} from $(X, a)$ to $(Y, b)$ is a set function $f:
X\to Y$ such that, for all $x_1, x_2 \in X$,
\[a(x_1, x_2) \le b(f(x_1), f(x_2)).\]
We denote by $\vcat$ the category whose objects are $\cV$-categories
and morphisms are $\cV$-functors. Note that a $\two$-category may be
simply seen as a preordered set, while a morphism of $\two$-categories
is a monotone function.

When $\cV$ is a commutative quantale, for $\cV$-categories $(X, a)$
and $(Y, b)$ we may define a new $\cV$-category $(X \times Y, a
\otimes b)$, where
\[(a \otimes b)((x, y), (x', y')) = a(x, x') \otimes b(y,y'),\]
for all $x, x' \in X$ and $y, y' \in Y$~\cite[Proposition
III.1.3.3]{HofmannSealTholen2014}. As this is an essential
construction in this paper, \emph{all} the quantales we consider will
be commutative, even if we do not mention it explicitly.

\subsection{Semidirect products of monoids}

In the theory of groups, it is a well-known result that, for groups
$H$ and $N$, semidirect products of the form $N \rtimes H$ are in a
bijective correspondence with split exact sequences $N \to G \to
H$. This result has later been extended to preordered
groups~\cite{ClementinoMartins-FerreiraMontoli19} and further
generalized to $\cV$-groups~\cite{ClementinoMontoli21}. On the other
hand, in the context of monoids, if one aims at describing semidirect
products of monoids via suitable short exact sequences, then one
should restrict to the so-called \emph{Schreier split
  extensions}~\cite{Martins-FerreiraMontoliSobral13}.

In what follows, we fix three monoids $(X, +)$, $(Y, \cdot)$, and $(Z,
\star)$. For the sake of readability we denote the operation on~$X$
additively, though $X$ is not assumed to be commutative.
\begin{definition}
  A \emph{Schreier point of monoids} is a split epimorphism $p: Z \to
  Y$, together with a section $s: Y \to Z$, for which there exists a
  unique set map $q: Z \to X$ satisfying
  \begin{equation}
    z = kq(z) \star sp(z),\label{eq:2}
  \end{equation}
  where $k : X \to Z$ is the kernel of~$p$.

  A \emph{Schreier split extension of monoids} is a diagram of the
  form
  \begin{equation}
    X \overset{k} {\hookrightarrow} Z
    \overset{p}{\underset{s}\rightleftarrows} Y,\label{eq:19}
  \end{equation}
  where $p$ is a Schreier point of monoids with section~$s$, and $k$
  the kernel of~$p$.
\end{definition}

We have that every Schreir split extension of monoids is a split exact
sequence~\cite[Proposition~2.7]{Martins-FerreiraMontoliSobral13}. For
a Schreier split extension as in~\eqref{eq:19}, we will often denote
by~$q$ the unique set map satisfying~\eqref{eq:2}.

Given a function $\alpha: Y \times X \to X$, we define a binary
operation on $X \times Y$ by
\[(x_1, y_1)\star (x_2, y_2) = (x_1 + \alpha(y_1, x_2), y_1\cdot
  y_2).\]
The algebra thus obtained is denoted by $X \rtimes_\alpha Y$.  Note
that $\alpha$ may be recovered from the binary operation on $X
\rtimes_\alpha Y$. Indeed, for every $x \in X$ and $y \in Y$, we have
\[(0, y)\star (x, 1) = (\alpha(y, x), y).\]
It is well-known that $\alpha$ is a monoid action if, and only if, $(X
\rtimes_\alpha Y, \star)$ is a monoid.

\begin{theorem}[{\cite[Theorem
    2.9]{Martins-FerreiraMontoliSobral13}}]\label{t:1} 
  There is a one-to-one correspondence between the Schreier split
  extensions $X \overset{k} {\hookrightarrow} Z
  \overset{p}{\underset{s}\rightleftarrows} Y$ and the monoid actions
  of~$Y$ on~$X$ (and thus, with the semidirect products $(X
  \rtimes_\alpha Y, \star)$). More precisely,
  \begin{enumerate}
  \item if $X \overset{k} {\hookrightarrow} Z
    \overset{p}{\underset{s}\rightleftarrows} Y$ is a Schreir split
    extension of monoids and $q: Z \to X$ is the unique set map
    satisfying~\eqref{eq:2}, then $\alpha: Y \times X \to X$ defined
    by $\alpha(y, x) = q(s(b) \star k(x))$ is a monoid action,
  \item if $(X \rtimes_\alpha Y, \star)$ is a semidirect product of
    monoids, then $X \overset{\iota_1} {\hookrightarrow} X
    \rtimes_\alpha Y
    \overset{\pi_2}{\underset{\iota_2}\rightleftarrows} Y$ is a
    Schreir split extension of monoids whose unique set map
    satisfying~\eqref{eq:2} is the projection $\pi_1: X \rtimes_\alpha
    Y \to X$.
  \end{enumerate}
  Moreover, these two assignments are mutually inverse and, if $X
  \overset{k} {\hookrightarrow} Z \overset{p}{\underset{s}\rightleftarrows} Y$
  is a Schreir split extension of monoids, then the maps
  \begin{equation}
    \varphi: X \rtimes_\alpha Y \to Z, \qquad (x, y) \mapsto k(x)
    \star s(y)\label{eq:18}
  \end{equation}
  and  
  \begin{equation}
    \psi: Z \to X \rtimes_\alpha Y, \qquad z \mapsto (q(z),
    p(z))\label{eq:17}
  \end{equation}
  are mutually inverse monoid homomorphisms.
\end{theorem}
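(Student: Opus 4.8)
The whole argument rests on a single commutation identity together with repeated appeals to the uniqueness clause in the definition of a Schreier point. Throughout I fix a Schreier split extension as in~\eqref{eq:19} and set $\alpha(y,x) = q(s(y) \star k(x))$ as in item~(a). First I would record the basic behaviour of $q$: since $k$ and $s$ are monoid homomorphisms, $k(0)$ and $s(1)$ both equal the unit of $Z$, so the equalities $k(x) = k(x) \star s(p(k(x)))$ and $s(y) = k(0) \star s(p(s(y)))$ exhibit $k(x)$ and $s(y)$ in the canonical form of~\eqref{eq:2}; by uniqueness of $q$ this forces $q(k(x)) = x$ and $q(s(y)) = 0$. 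Applying~\eqref{eq:2} to $z = s(y) \star k(x)$, whose image under $p$ is $y$, yields the key commutation relation
\begin{equation}
  s(y) \star k(x) = k(\alpha(y,x)) \star s(y). \label{eq:comm}
\end{equation}

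For item~(a) I would verify the four monoid-action axioms directly. Unitality $\alpha(1,x) = x$ and $\alpha(y,0) = 0$ follow at once from $q(k(x)) = x$ and $q(s(y)) = 0$. For additivity in the second variable I would rewrite $s(y) \star k(x_1) \star k(x_2)$ by applying~\eqref{eq:comm} twice to move both $k$-factors to the left, obtaining $k(\alpha(y,x_1) + \alpha(y,x_2)) \star s(y)$; since $p$ of this element is $y$, uniqueness of $q$ gives $\alpha(y, x_1 + x_2) = \alpha(y,x_1) + \alpha(y,x_2)$. The compatibility $\alpha(y_1 y_2, x) = \alpha(y_1, \alpha(y_2, x))$ is obtained in the same spirit, by pushing $k(x)$ leftward first through $s(y_2)$ and then through $s(y_1)$ via~\eqref{eq:comm} and reading off $q$. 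This last identity is the most delicate point of the proof, since it is precisely where the chaining of~\eqref{eq:comm} and the uniqueness clause must be combined with care; once it is in hand the rest is bookkeeping.

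For item~(b), given a monoid action $\alpha$ I would check that $\iota_1, \iota_2$ are monoid homomorphisms, that $\pi_2 \iota_2 = \mathrm{id}_Y$, and that $\iota_1$ is the kernel of $\pi_2$, all routine from the definition of $\star$ on $X \rtimes_\alpha Y$ and the action axioms. That $\pi_1$ satisfies~\eqref{eq:2} is the identity $(x,1)\star(0,y) = (x,y)$, and its uniqueness follows because any map satisfying~\eqref{eq:2} must return the first coordinate. To see the two assignments are mutually inverse on actions, I would note that feeding $X \rtimes_\alpha Y$ back through item~(a) recovers $\alpha$, since $\pi_1(\iota_2(y) \star \iota_1(x)) = \alpha(y,x)$.

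Finally, to close the correspondence in the other direction I would show that $\varphi$ and $\psi$ of~\eqref{eq:18}--\eqref{eq:17} are mutually inverse monoid homomorphisms compatible with the structure maps. That $\varphi$ preserves the operation is exactly~\eqref{eq:comm}, whereas $\psi$ being a homomorphism reduces to the identity $q(z_1 \star z_2) = q(z_1) + \alpha(p(z_1), q(z_2))$, which I would derive by writing each $z_i$ in the form~\eqref{eq:2}, applying~\eqref{eq:comm}, and again invoking uniqueness of $q$. The relations $\psi \varphi = \mathrm{id}$ and $\varphi \psi = \mathrm{id}$ then follow from $q(k(x) \star s(y)) = x$ (by uniqueness) and from~\eqref{eq:2} itself, respectively, which together establish that the original extension and the canonical one built from $\alpha$ are isomorphic and complete the proof.
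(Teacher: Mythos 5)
Your proof is correct, and it is worth noting that the paper itself contains no proof of this statement: it is recalled as background from~\cite[Theorem~2.9]{Martins-FerreiraMontoliSobral13}, so the only meaningful comparison is with the standard argument of that reference, which yours reproduces. The ingredients you use --- the pointwise uniqueness of the Schreier decomposition (equivalent to the uniqueness of the map $q$, since a failure at a single point would allow $q$ to be modified there), the identities $q(k(x)) = x$ and $q(s(y)) = 0$, the commutation relation $s(y) \star k(x) = k(\alpha(y,x)) \star s(y)$, and the verification that $\varphi$ and $\psi$ are mutually inverse homomorphisms --- are exactly those of the classical proof.
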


The preordered version of Theorem~\ref{t:1} was considered
in~\cite{Martins-FerreiraSobral21}, but only for a special class of
preordered monoids that retain a certain group-like behavior. Further
details on this will be provided in Section~\ref{sec:preorderd-mon}.
\section{The category of quantale enriched monoids}\label{sec:v-mon}

Let $\cV$ be a (commutative) quantale. A \emph{$\cV$-monoid} is a
triple $(X, a, +)$ such that $(X, a)$ is a $\cV$-category, $(X, +)$ is
a monoid and the monoid operation induces a $\cV$-functor
$(\_+\_):(X,a) \otimes (X, a) \to (X, a)$. We recall that, although we
are denoting the monoid operation additively, we do not assume that
$X$ is commutative and this will be the usual practice in the
remaining paper. When the monoid operation is clear from the context
(or irrelevant), we may simply say that $(X, a)$ is a $\cV$-monoid. A
\emph{morphism of $\cV$-monoids} $h: (X, a, +) \to (Y, b, \cdot)$ is a
set map $h:X \to Y$ such that $h: (X, a) \to (Y, b)$ is a
$\cV$-functor and $h: (X, +) \to (Y, \cdot)$ is a monoid
homomorphism. We denote by $\cV$-$\mon$ the category of $\cV$-monoids
and corresponding homomorphisms.

Starting from the category of $\cV$-monoids, we may either forget the
$\cV$-category or the monoid structure, thereby obtaining two
forgetful functors $U: \vmon \to \mon$ and $V:\vmon \to \vcat$,
respectively. Similarly to what happens for
$\cV$-groups~\cite{ClementinoMontoli21} and for preordered
monoids~\cite{Martins-FerreiraSobral21}, $U$ is a topological functor
and $V$ a monadic one. We do not include the proofs of these two
facts, as they are simple adaptations of the proof
of~\cite[Theorem~4.1]{ClementinoMontoli21}. In particular, it follows
that the category of $\cV$-monoids is both complete and
cocomplete. Moreover, limits are preserved by both forgetful functors,
while colimits are preserved by~$U$.  We make a few observations that
will be relevant in the sequel. First note that the initial object of
$\vmon$ is $(\{*\}, \kappa)$, where $\kappa(*, *) = k$, while its
terminal object is $(\{*\}, \tau)$, where $\tau(*,*) = \top$. In
particular, $\vmon$ is a pointed category if, and only if, $k = \top$.
When that is the case, the kernel of a morphism $h: (X, a) \to (Y, b)$
of $\cV$-monoids is the $\cV$-monoid $(Z, c)$, where
\[Z = \{x \in X\mid h(x) = 1\}\]
is a submonoid of $X$ and $c$ is the suitable restriction of
$a$. Finally, an epimorphism of $\cV$-monoids is simply a morphism
whose underlying monoid homomorphism is an epimorphism.

The following result is a simple observation, but we will occasionally
use it in the remaining paper.
\begin{lemma}\label{l:8}
  Let $(X, a)$ be a $\cV$-category and $(X, +)$ be a monoid. Then,
  $(\_+\_):(X, a) \otimes (X, a) \to (X, a)$ is a $\cV$-functor if,
  and only if, for every $x, y, z \in X$,
  \begin{equation}
    a(x, y) \leq a(x + z, y + z) \quad \text{and}\quad a(x, y) \leq
    a(z + x, z + y).\label{eq:4}    
  \end{equation}
\end{lemma}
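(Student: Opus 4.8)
The plan is to unwind the definition of a $\cV$-functor for the specific map $(\_+\_)$ and to recognize that it is exactly the conjunction of the two one-sided inequalities in~\eqref{eq:4}. Using the description of the product $\cV$-category $(X,a)\otimes(X,a)$, the statement that $(\_+\_):(X,a)\otimes(X,a)\to(X,a)$ is a $\cV$-functor becomes the single condition that, for all $x_1,x_2,y_1,y_2\in X$,
\begin{equation*}
a(x_1,x_2)\otimes a(y_1,y_2)\le a(x_1+y_1,\,x_2+y_2).
\end{equation*}
I will refer to this inequality as $(\ast)$ and prove that it is equivalent to~\eqref{eq:4}.

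For the forward direction I would specialize $(\ast)$. Setting $y_1=y_2=z$ and invoking reflexivity ($k\le a(z,z)$) together with the fact that $k$ is the unit of $\otimes$ and that $\otimes$ is monotone, I obtain $a(x_1,x_2)=a(x_1,x_2)\otimes k\le a(x_1,x_2)\otimes a(z,z)\le a(x_1+z,x_2+z)$, which is the first inequality of~\eqref{eq:4}. Setting instead $x_1=x_2=z$ and arguing symmetrically yields the second.

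For the converse, assume~\eqref{eq:4}. The key step is to split the translation on both sides and then glue with transitivity. Applying the first inequality with $z=y_1$ gives $a(x_1,x_2)\le a(x_1+y_1,x_2+y_1)$, while the second with $z=x_2$ gives $a(y_1,y_2)\le a(x_2+y_1,x_2+y_2)$. Combining these using monotonicity of $\otimes$ and then applying transitivity of $a$ at the intermediate point $x_2+y_1$ yields
\begin{equation*}
a(x_1,x_2)\otimes a(y_1,y_2)\le a(x_1+y_1,x_2+y_1)\otimes a(x_2+y_1,x_2+y_2)\le a(x_1+y_1,x_2+y_2),
\end{equation*}
which is precisely $(\ast)$.

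The argument is entirely formal; the only point requiring care is the converse, where one must choose the correct intermediate element $x_2+y_1$ so that transitivity of $a$ applies. This choice is exactly what makes the two one-sided conditions jointly suffice, and it is the step I would present most explicitly.
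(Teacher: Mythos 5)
Your proof is correct and follows essentially the same route as the paper: the forward direction specializes the functoriality inequality using reflexivity $k \le a(z,z)$ and monotonicity of $\otimes$, and the converse composes a right-translation and a left-translation via transitivity of $a$ at the intermediate point (your $x_2+y_1$ is exactly the paper's $y_1+x_2$ after relabeling). No gaps; the two arguments coincide up to notation.
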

\begin{proof}
  By definition, $+$ is a $\cV$-functor if, and only if, for all $x_1,
  x_2, y_1, y_2 \in X$, the following equality holds
  \[a(x_1,y_1) \otimes a(x_2, y_2) \leq a(x_1+ x_2, y_1+ y_2).\]
  In particular, using the fact that $a$ is reflexive, we have
  \[a (x, y) = a(x, y) \otimes k \leq a(x, y) \otimes a(z, z) \leq a(x
    + z, y + z)\]
  and
  \[a (x, y) = k \otimes a(x, y) \leq a(z, z) \otimes a(x, y)\leq a(z
    + x, z + y).\]
  Conversely, using~\eqref{eq:4} and transitivity of~$a$, we may
  derive that
  \[a(x_1,y_1) \otimes a(x_2, y_2) \leq a(x_1 + x_2,y_1 + x_2) \otimes
    a(y_1+ x_2, y_1+ y_2) \leq a(x_1+ x_2, y_1+ y_2).\popQED\]
\end{proof}

\section{Semidirect products of quantale enriched
  monoids}\label{sec:schreier}

In this section we will assume that the quantale $\cV$ is such that
$\vmon$ is a pointed category, that is, $k = \top$ in $\cV$. We will
also fix $\cV$-monoids $(X, a, +)$, $(Y, b, \cdot)$, and $(Z, c,
\star)$.

As already mentioned, semidirect products of monoids are closely
related to the so-called Schreier split extensions of monoids. In the
context of quantale enriched monoids, we shall consider the following
definitions. Recall that we have a forgetful functor $U: \vmon \to
\mon$.
\begin{definition}
  We call \emph{$U$-Schreier point of $\cV$-monoids} to a split
  epimorphism of $\cV$-monoids $p: (Z, c, \star) \to (Y, b, \cdot)$,
  together with a section $s$, such that $Up: UZ \to UY$, together
  with the section $Us$, is a Schreier point of monoids.

  A \emph{$U$-Schreier split extension of $\cV$-monoids} is a diagram
  of the form
  \[ (X, a) \overset{k} {\hookrightarrow} (Z, c)
    \overset{p}{\underset{s}\rightleftarrows} (Y, b)\]
  in $\vmon$, where $p$ is a $U$-Schreier point of $\cV$-monoids with
  section $s$, and $k$ is the kernel of $p$.
\end{definition}

Our goal is to present a characterization of the $U$-Schreier points
of $\cV$-monoids with codomain $(Y, b)$ and kernel $(X, a)$ or, in
other words, the $U$-Schreier split extensions of $\cV$-monoids of the
form
\begin{equation}
  (X, a) \overset{k} {\hookrightarrow} (Z, c)
  \overset{p}{\underset{s}\rightleftarrows} (Y, b).\label{eq:3}
\end{equation}
We remark that, since the diagram formed by the underlying monoid
homomorphisms of~\eqref{eq:3} is a Schreier split extension of
monoids, by Theorem~\ref{t:1}, the monoid $Z$ is isomorphic to a
semidirect product of the form $X \rtimes_\alpha Y$, the maps $k$ and
$s$ are isomorphic to the inclusions $\iota_1$ and $\iota_2$,
respectively, and $p$ is isomorphic to the projection
$\pi_2$. Moreover, the unique set map~$q$ satisfying~\eqref{eq:2} is
isomorphic to the projection $\pi_1: X \rtimes_\alpha Y \to X$. In
particular, the diagram~\eqref{eq:3} is isomorphic to
\begin{equation}
  (X, a) \overset{\iota_1} {\hookrightarrow} (X \rtimes_\alpha Y, c)
  \overset{\pi_2}{\underset{\iota_2}\rightleftarrows} (Y,
  b)\label{eq:8}
\end{equation}
and the latter is a $U$-Schreier split extension of $\cV$-monoids if, and
only if, for all $x, x' \in X$ and $y,y' \in Y$, the following
conditions hold:
\begin{enumerate}[label = (S.\arabic*)]
\item\label{item:S1} $\pi_2$ is a $\cV$-functor, that is,
  \[c((x, y), (x', y')) \leq b(y, y');\]
\item\label{item:S2} $\iota_1$ is the kernel of $\pi_2$, that is,
  \[a(x, x') = c((x, 1), (x', 1));\]
\item\label{item:S3} $\iota_2$ is a $\cV$-functor, that is,
  \[b(y, y') \leq c((0, y), (0, y')).\]
\end{enumerate}

In~\cite{ClementinoMontoli21}, for $\cV$-monoids $(X, a)$ and $(Y,
b)$, the (reverse) lexicographic $\cV$-relation $\llex: (X \times Y)
\times (X \times Y) \to \cV$ was defined by
\[ \llex((x, y), (x',y')) =
  \begin{cases}
    a(x, x'), \text{ if } y = y'; \\ b(y, y'), \text{ else}.
  \end{cases}\]
Here, we consider its weaken version $\wlex: (X \times Y) \times (X
\times Y) \to V$ given by
\[ \wlex((x, y), (x',y')) =
  \begin{cases}
    a(x, x'), \text{ if } y = y' = 1;
    \\ b(y, y'), \text{ else}.
  \end{cases}\]
We note that the $\cV$-relations $\llex$ and $\wlex$ coincide on every
tuple $((x, y), (x',y'))$ unless $y = y' \neq 1$ and, in that case, we
have
\[\llex ((x, y), (x', y)) = a(x, x') \leq k = b(y, y) = \wlex ((x,
  y), (x', y)).\] This in particular shows that $\llex \leq \wlex$. In
fact, the two relations coincide only in very particular cases.

\begin{lemma}\label{l:1}
  Let $(X, a)$ and $(Y, b)$ be $\cV$-monoids. Then, the following are
  equivalent:
  \begin{enumerate}
  \item \label{item:7} $\llex = \wlex$,
  \item\label{item:8} $Y$ is trivial or $a(x, x') = k$ for all $x,
    x'\in X$.
  \end{enumerate}
\end{lemma}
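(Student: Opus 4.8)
The plan is to carry out a direct comparison of the two $\cV$-relations on the only tuples where they can disagree, following the discussion immediately preceding the statement. Recall that $\llex$ and $\wlex$ agree on every tuple $((x,y),(x',y'))$ except possibly those with $y = y' \neq 1$, and that on such a tuple one has $\llex((x,y),(x',y)) = a(x,x')$ while $\wlex((x,y),(x',y)) = b(y,y)$. Since we are working in the pointed setting of this section, where $k = \top$, reflexivity of $b$ gives $k \leq b(y,y) \leq \top = k$, so $b(y,y) = k$. Thus the equality $\llex = \wlex$ reduces precisely to the requirement that $a(x,x') = k$ for all $x, x' \in X$ \emph{whenever there exists $y \in Y$ with $y \neq 1$.}

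To establish \ref{item:8} $\Rightarrow$ \ref{item:7}, I would split into the two cases of the disjunction. If $Y$ is trivial, then there is no $y \neq 1$, so no exceptional tuples exist and the two relations coincide everywhere by the preceding remark. If instead $a(x,x') = k$ for all $x, x' \in X$, then on each exceptional tuple both relations take the value $k$, so again they agree; together with agreement on all non-exceptional tuples, this yields $\llex = \wlex$.

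For the converse \ref{item:7} $\Rightarrow$ \ref{item:8}, I would assume $\llex = \wlex$ and show that if $Y$ is not trivial then the second alternative holds. Nontriviality of $Y$ provides some $y_0 \in Y$ with $y_0 \neq 1$; evaluating both relations on the tuple $((x, y_0), (x', y_0))$ gives $a(x,x') = \llex((x,y_0),(x',y_0)) = \wlex((x,y_0),(x',y_0)) = k$ for all $x, x' \in X$. Hence $Y$ is trivial or $a(x,x') = k$ for all $x,x' \in X$, as required.

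There is essentially no genuine obstacle here, since the argument is a short case analysis; the only point demanding care is the use of the section-wide hypothesis $k = \top$ to upgrade reflexivity $k \leq b(y,y)$ to the equality $b(y,y) = k$, which is exactly what makes the value of $\wlex$ on the exceptional tuples equal to $k$ and thereby drives the whole equivalence.
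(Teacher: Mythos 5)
Your proof is correct and follows essentially the same route as the paper: both reduce the equality $\llex = \wlex$ to agreement on the exceptional tuples $((x,y),(x',y))$ with $y \neq 1$, where the section-wide hypothesis $k = \top$ (via reflexivity of $b$) forces $\wlex((x,y),(x',y)) = b(y,y) = k$, so that equality amounts to $a(x,x') = k$ whenever a nonidentity $y$ exists. Your write-up merely makes explicit the case analysis that the paper compresses into its final sentence.
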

\begin{proof}
    We first observe that $\llex = \wlex$ if, and only if, for every $x,
  x' \in X$ and $y \in Y \setminus \{1\}$, the equality
  \[\llex((x, y), (x', y)) = \wlex((x, y), (x', y))\]
  holds.
  By definition of $\llex$ and $\wlex$ this is equivalent to having
  \[a(x, x') = b(y, y) = k,\]
  from where we may conclude that \ref{item:7} and \ref{item:8} are
  indeed equivalent.
\end{proof}

We also note that the $\cV$-relations $\llex$ and $\wlex$ are always
reflexive but they may not be transitive. Indeed, we have the
following:
\begin{lemma}\label{l:3}
  The $\cV$-relation $\wlex$ is transitive if, and only if, for every
  $x, x' \in X$ and every $y \in Y \setminus \{1\}$, we have $b(1,
  y) \otimes b(y, 1) \leq a(x, x')$.
\end{lemma}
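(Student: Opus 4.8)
The plan is to unravel transitivity of $\wlex$ as the family of inequalities
\[\wlex((x_1, y_1), (x_2, y_2)) \otimes \wlex((x_2, y_2), (x_3, y_3)) \leq \wlex((x_1, y_1), (x_3, y_3))\]
ranging over all triples of pairs, and to run a case analysis according to which of $y_1, y_2, y_3$ are equal to~$1$. The key starting observation is that $\wlex$ returns an $a$-value on a given pair exactly when both of its second coordinates are~$1$, and a $b$-value otherwise; hence each of the three factors above is determined once we record which of $y_1, y_2, y_3$ equal~$1$, splitting the check into eight subcases.

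I would first dispose of the subcases that hold unconditionally. If $y_1 = y_2 = y_3 = 1$, all three factors are $a$-values and the inequality is transitivity of~$a$. If the right-hand side is a $b$-value (that is, $y_1 \neq 1$ or $y_3 \neq 1$) and both left-hand factors are $b$-values as well, the inequality is an instance of transitivity of~$b$. The intermediate subcases are those in which the right-hand side is a $b$-value while one of the two left-hand factors is an $a$-value (this forces $y_2 = 1$ together with $y_1 = 1$ or $y_3 = 1$); here I would use the standing assumption $k = \top$ of this section: since every $a$-value is then below $k = \top$ and $k$ is the unit for $\otimes$, that factor is absorbed, as in $a(x_1, x_2) \otimes b(1, y_3) \leq k \otimes b(1, y_3) = b(1, y_3)$. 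This settles every subcase except one.

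The sole remaining subcase is $y_1 = y_3 = 1$ and $y_2 = y \neq 1$, for which the right-hand side is the $a$-value $a(x_1, x_3)$ while the two left-hand factors are $b(1, y)$ and $b(y, 1)$; thus the inequality to be verified is precisely
\[b(1, y) \otimes b(y, 1) \leq a(x_1, x_3).\]
Taking this particular triple (with the middle element arbitrary, its first coordinate being irrelevant since $y \neq 1$) shows that transitivity forces the stated condition, while conversely the stated condition together with the routine subcases yields transitivity. I do not anticipate a genuine obstacle: the whole content lies in keeping the eight subcases organized and in observing that the only one not already forced by transitivity of~$a$, transitivity of~$b$, or the identity $k = \top$ is exactly the asserted inequality.
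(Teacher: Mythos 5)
Your proof is correct and follows essentially the same route as the paper's: the forward direction uses the triple $((x,1),(0,y),(x',1))$, and the backward direction is the same case analysis, handling the mixed $a$/$b$ cases by absorbing the $a$-factor via $a \leq k = \top$ and reducing the only nontrivial case ($y_1 = y_3 = 1$, $y_2 \neq 1$) to the stated hypothesis. The only difference is cosmetic: you index the cases by which of $y_1, y_2, y_3$ equal $1$, whereas the paper indexes them by the form taken by the left-hand side.
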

\begin{proof} If the $\cV$-relation $\wlex$ is transitive then, for $y
  \neq 1$, we have
  \begin{align*}
    b(1, y) \otimes b(y, 1)
    & = \wlex ((x, 1), (0, y)) \otimes \wlex((0, y), (x',1)) \qquad
      \text{(by definition of $\wlex$)}
    \\ & \leq \wlex ((x, 1), (x', 1)) \qquad \text{(because $\wlex$ is
         transitive)}
    \\ & = a(x, x') \qquad \text{(by definition of $\wlex$)}.
  \end{align*}
 
  Conversely, let $(x, y)$, $(x', y')$, and $(x'', y'')$ belong to $X
  \times Y$. If $y = y' = y'' = 1$, then
  \begin{align*}
    \wlex ((x, y), (x', y')) \otimes \wlex ((x', y'), (x'', y''))
    & = a(x, x') \otimes a(x', x'')  \qquad \text{(by definition of
      $\wlex$)}
    \\ & \leq a(x, x'') \qquad \text{(because $a$ is transitive)}
    \\ & = \wlex((x, y), (x'', y'')) \qquad \text{(by definition of $\wlex$)}.
  \end{align*}

  If, on the other hand, we have $y = y'' = 1$ but $y' \neq 1$, then
  \begin{align*}
    \wlex ((x, y), (x', y')) \otimes \wlex ((x', y'), (x'', y''))
    & = b(1, y') \otimes b(y', 1)  \qquad \text{(by definition of
      $\wlex$)}
    \\ & \leq a(x, x'') \qquad \text{(by hypothesis)}
    \\ & = \wlex((x, y), (x'', y'')) \qquad \text{(by definition of
         $\wlex$)}.
  \end{align*}

  Finally, suppose that we do not have $y = y'' = 1$. Then, we have
  \[\wlex((x, y), (x'',y'')) = b(y, y'')\]
  and the expression
  \begin{equation}
    \wlex ((x, y), (x', y')) \otimes \wlex ((x', y'), (x'',
    y''))\label{eq:9}
  \end{equation}
  is equal to one of the following:
  \begin{enumerate}[label = $(\roman*)$]
  \item\label{item:12} $a(x, x') \otimes b(y', y'')$,
  \item\label{item:13} $b(y, y') \otimes a(x', x'')$, or
  \item\label{item:14} $b(y, y') \otimes b(y', y'')$.
  \end{enumerate}
  Then, we have that~\eqref{eq:9} equals the expression
  in~\ref{item:12} if, and only if, $y = y' = 1$ and $y'' \neq
  1$. And, in that case,
  \begin{align*}
    \wlex ((x, y), (x', y')) \otimes \wlex ((x', y'), (x'', y''))
    & = a(x, x') \otimes b(y', y'')
    \\ & \leq b(1, y'') \qquad \text{(because $a(x, x') \leq k$)}
    \\ & = \wlex((x, y), (x'', y'')) \qquad \text{(because $y'' \neq 1$)}.
  \end{align*}
  Similarly, if~\eqref{eq:9} equals the expression in~\ref{item:13},
  then we must have $y \neq 1$ and $y' = y'' = 1$, and thus,
  \begin{align*}
    \wlex ((x, y), (x', y')) \otimes \wlex ((x', y'), (x'', y''))
    & = b(y, y') \otimes a(x', x'')
    \\ & \leq b(y, 1) \qquad \text{(because $a(x', x'') \leq k$)}
    \\ & = \wlex((x, y), (x'', y'')) \qquad \text{(because $y \neq 1$)}.
  \end{align*}
  Finally, if~\eqref{eq:9} equals the expression in~\ref{item:14}
  then,
  \begin{align*}
    \wlex ((x, y), (x', y')) \otimes \wlex ((x', y'), (x'', y''))
    & = b(y, y') \otimes b(y', y'')
    \\ & \leq b(y, y'') \qquad \text{(because $b$ is transitive)}
    \\ & = \wlex((x, y), (x'', y'')) 
    \\ & \hspace{2cm}\text{(because we do not
         have $y = y'' = 1$)}.\popQED \qed
  \end{align*}
\end{proof}

\begin{lemma}\label{l:4}
  The $\cV$-relation $\llex$ is transitive if, and only if, for every
  $x, x' \in X$ and every $y, y' \in Y$ with $y \neq y'$, we have
  $b(y, y') \otimes b(y', y) \leq a(x, x')$.
\end{lemma}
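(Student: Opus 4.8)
The plan is to mirror the proof of Lemma~\ref{l:3} almost verbatim, since $\llex$ differs from $\wlex$ only in how it treats tuples with $y = y' \neq 1$, and the defining case split for $\llex$ is governed by the single condition ``$y = y'$'' rather than ``$y = y' = 1$''. First I would establish the forward direction: assuming $\llex$ is transitive, I fix $x, x' \in X$ and $y, y' \in Y$ with $y \neq y'$, and evaluate the transitivity inequality on the triple $(x, y)$, $(0, y')$, $(x', y)$. By definition of $\llex$, the factor $\llex((x, y), (0, y'))$ equals $b(y, y')$ (as $y \neq y'$) and $\llex((0, y'), (x', y))$ equals $b(y', y)$ (as $y' \neq y$), so transitivity yields $b(y, y') \otimes b(y', y) \leq \llex((x, y), (x', y)) = a(x, x')$, which is exactly the claimed condition.

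For the converse I would verify transitivity directly by a case analysis on the second coordinates $y, y', y''$ of an arbitrary triple $(x, y)$, $(x', y')$, $(x'', y'')$. The key organizing observation is that $\llex((x, y), (x'', y''))$ equals $a(x, x'')$ precisely when $y = y''$ and equals $b(y, y'')$ otherwise. The interesting subcase is when $y = y''$ but $y' \neq y$: here the product $\llex((x, y), (x', y')) \otimes \llex((x', y'), (x'', y''))$ equals $b(y, y') \otimes b(y', y)$, which by hypothesis is $\leq a(x, x'') = \llex((x, y), (x'', y''))$. When $y = y' = y''$ the product is $a(x, x') \otimes a(x', x'') \leq a(x, x'')$ by transitivity of $a$; and when $y \neq y''$ the target is $b(y, y'')$, and one checks in each of the remaining configurations that the product is bounded above by $b(y, y'')$, using reflexivity of $a$ (i.e. $a \leq k$) to absorb any stray $a$-factor into $k$, and transitivity of $b$ when both factors are $b$-values.

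I expect the main obstacle to be bookkeeping rather than conceptual: there are more surviving configurations than in Lemma~\ref{l:3} because $\llex$ does not collapse the $y = y' \neq 1$ tuples to $k$, so the case where $y' \neq y = y''$ genuinely requires the new hypothesis and is not subsumed by reflexivity. I would therefore be careful to isolate exactly which case forces the condition $b(y, y') \otimes b(y', y) \leq a(x, x')$, and to confirm that in every case where the target is a $b$-value the bound follows from transitivity of $b$ together with $a \leq k$, so that the new hypothesis is needed in, and only in, the ``$y = y'' \neq y'$'' case.
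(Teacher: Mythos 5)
Your proof is correct and is exactly the ``obvious adaptation'' of Lemma~\ref{l:3} that the paper itself invokes (the paper omits the proof of Lemma~\ref{l:4}, stating it is analogous), with the same structure: a one-triple argument for the forward direction and a case split on the pattern of second coordinates for the converse, where the new hypothesis is needed precisely in the case $y = y'' \neq y'$. One small mislabel: the inequality $a(x, x') \leq k$ you use to absorb stray $a$-factors is not ``reflexivity'' (reflexivity gives $k \leq a(x, x)$) but the standing assumption of Section~\ref{sec:schreier} that $k = \top$; the step itself is fine, as the paper uses the same inequality in the proof of Lemma~\ref{l:3}.
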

We omit the proof of this result as it is analogous to that of
Lemma~\ref{l:3} with the obvious adaptations.

While in the context of $\cV$-groups, the relation $\llex$ is the
biggest possible $\cV$-enrichment of a semidirect
product~\cite[Proposition~7.6]{ClementinoMontoli21}, in the setting of
$\cV$-monoids we have the following:
\begin{proposition}\label{p:1}
  Let $\alpha: Y \times X \to X$ be a monoid action and $c: (X \times
  Y) \times (X \times Y) \to V$ be a $\cV$-relation on $X \times Y$
  that turns $X \rtimes_\alpha Y$ into a $\cV$-monoid. Then, the
  following are equivalent:
  \begin{enumerate}
  \item\label{item:10} $(X, a) \overset{\iota_1} {\hookrightarrow} (X
    \rtimes_\alpha Y, c) \overset{\pi_2}{\underset{\iota_2}\rightleftarrows} (Y, b)$
    is a $U$-Schreier split extension of $\cV$-monoids,
  \item\label{item:11} $a \otimes b \leq c \leq \wlex$.
  \end{enumerate}
\end{proposition}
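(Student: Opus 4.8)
The plan is to work with the reformulation of condition~\ref{item:10} as the three conditions~\ref{item:S1}--\ref{item:S3} already recorded above, and to prove the two inequalities in~\ref{item:11} separately, matching each against those conditions. Throughout I would use that $k = \top$ in this section, so that reflexivity of $a$ and $b$ forces $a(0,0) = b(1,1) = k$.

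First I would show that~\ref{item:10} implies~\ref{item:11}. For the upper bound $c \leq \wlex$ I argue by cases on the defining split of $\wlex$: when $y = y' = 1$ the inequality reads $c((x,1),(x',1)) \leq a(x,x')$, which is (one half of)~\ref{item:S2}; in all other cases it reads $c((x,y),(x',y')) \leq b(y,y')$, which is precisely~\ref{item:S1}. For the lower bound $a \otimes b \leq c$ I exploit the factorization $(x,y) = (x,1)\star(0,y)$, valid because $\alpha(1,0) = 0$ since $\alpha$ is a monoid action. Applying the $\cV$-functoriality of $\star$ (equivalently, Lemma~\ref{l:8}) to the pairs $(x,1),(x',1)$ and $(0,y),(0,y')$ yields
\[c((x,1),(x',1)) \otimes c((0,y),(0,y')) \leq c((x,y),(x',y')),\]
after which~\ref{item:S2} replaces the first factor by $a(x,x')$ and~\ref{item:S3} bounds $b(y,y')$ below the second factor; monotonicity of $\otimes$ closes the estimate.

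Conversely, to derive~\ref{item:10} from~\ref{item:11} I read each condition off the two inequalities. Condition~\ref{item:S1} follows from $c \leq \wlex$ directly when we do not have $y = y' = 1$, and trivially otherwise since $b(1,1) = \top$. Condition~\ref{item:S3} follows from $a \otimes b \leq c$ by setting $x = x' = 0$ and using $a(0,0) = k$ as a left unit for $\otimes$. Condition~\ref{item:S2} is the conjunction of two opposite inequalities: $c((x,1),(x',1)) \leq a(x,x')$ comes from $c \leq \wlex$, while $a(x,x') \leq c((x,1),(x',1))$ comes from $a \otimes b \leq c$ together with $b(1,1) = k$.

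The only step that genuinely uses the hypothesis that $c$ turns $X \rtimes_\alpha Y$ into a $\cV$-monoid, rather than mere order bookkeeping with $k = \top$, is the proof of $a \otimes b \leq c$ in the forward direction, and I expect this to be the main point to get right. The crux is choosing the factorization $(x,y) = (x,1) \star (0,y)$ so that the compatibility of $c$ with $\star$ combines cleanly with~\ref{item:S2} and~\ref{item:S3}; everything else reduces to the case analysis built into the definition of $\wlex$ and to the identities $a(0,0) = b(1,1) = k$.
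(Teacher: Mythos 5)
Your proposal is correct and follows essentially the same route as the paper: the forward direction establishes $a\otimes b\leq c$ via the factorization $(x,y)=(x,1)\star(0,y)$ together with \ref{item:S2}, \ref{item:S3} and the $\cV$-functoriality of $\star$, and obtains $c\leq\wlex$ by the same case split; the converse reads \ref{item:S1}--\ref{item:S3} off the two inequalities exactly as the paper does, using $a(0,0)=b(1,1)=k$. No gaps.
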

\begin{proof}
  Suppose that~\ref{item:10} holds.  Using~\ref{item:S2}
  and~\ref{item:S3}, and the fact that $(X \rtimes_\alpha Y, c)$ is a
  $\cV$-monoid, we have
  \begin{align*}
    a(x, x') \otimes b(y, y')
    & \leq c((x, 1), (x', 1)) \otimes c((0, y), (0, y'))
    \\ & \leq c((x, 1)(0, y), (x', 1)(0, y')) = c ((x, y),
         (x', y')),
  \end{align*}
  which proves that $a\otimes b\leq c$.  Now, using~\ref{item:S2}
  again, we have
  \[c((x, 1), (x',1)) \leq a(x, x') = \wlex((x, 1), (x', 1));\]
  and, by~\ref{item:S1},
  \[c((x, y), (x', y')) \leq b(y, y') = \wlex((x, y), (x', y')),\]
  where the last equality holds if at least one of $y$ and $y'$ is
  different from~$1$. This shows that $c \leq \wlex$.

  Conversely, let us suppose that $a\otimes b \leq c \leq \wlex$. By
  definition of $\wlex$, we have
  \[\wlex((x, y), (x', y')) \leq b(y, y')\]
  and thus~\ref{item:S1} holds because we are assuming that $c \leq
  \wlex$. This inequality also yields
  \[c((x, 1), (x',1)) \leq \wlex ((x, 1), (x', 1)) = a(x, x'),\]
  which is half of the equality~\ref{item:S2}. Finally, using the
  assumption $a\otimes b \leq c$, we have
  \[a(x, x') = (a\otimes b)((x, 1), (x', 1)) \leq c((x, 1), (x', 1))\]
  and
  \[b(y, y') = (a \otimes b)((0, y), (0, y')) \leq c((0, y), (0,
    y')),\]
  which show the other half of~\ref{item:S2} and~\ref{item:S3},
  respectively.
\end{proof}
As an immediate consequence, we have the following:
\begin{corollary}\label{c:1}
  Let $\alpha: Y \times X \to X$ be a monoid action and $c: (X \times
  Y) \times (X \times Y) \to V$ be a $\cV$-relation satisfying $a
  \otimes b\leq c \leq \wlex$. Then, the following are equivalent:
  \begin{enumerate}
  \item $(X, a) \overset{\iota_1} {\hookrightarrow} (X \rtimes_\alpha
    Y, c) \overset{\pi_2}{\underset{\iota_2}\rightleftarrows} (Y, b)$
    is a $U$-Schreier split extension of $\cV$-monoids,
  \item $(X \rtimes_\alpha Y, c)$ is a $\cV$-monoid.
  \end{enumerate}
\end{corollary}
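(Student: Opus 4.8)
The plan is to leverage Proposition~\ref{p:1} directly, since Corollary~\ref{c:1} is essentially a repackaging of it under the standing hypothesis that $a \otimes b \leq c \leq \wlex$. The key observation is that, by definition, a $U$-Schreier split extension requires two things simultaneously: that $(X \rtimes_\alpha Y, c)$ actually \emph{be} a $\cV$-monoid (so that the diagram lives in $\vmon$ at all), and that the diagram satisfy the Schreier conditions \ref{item:S1}--\ref{item:S3}. Proposition~\ref{p:1} was stated under the blanket assumption that $c$ already turns $X \rtimes_\alpha Y$ into a $\cV$-monoid, and it characterizes \ref{item:S1}--\ref{item:S3} as being equivalent to the inequality $a \otimes b \leq c \leq \wlex$. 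The corollary flips the emphasis: now we \emph{assume} the inequality holds and ask whether the two remaining pieces of data — being a $\cV$-monoid versus being a $U$-Schreier split extension — are equivalent.

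First I would prove the implication from being a $U$-Schreier split extension to $(X \rtimes_\alpha Y, c)$ being a $\cV$-monoid. This direction is immediate, essentially by definition: a $U$-Schreier split extension is a diagram in $\vmon$, so in particular its central object $(X \rtimes_\alpha Y, c)$ is required to be a $\cV$-monoid. No computation is needed here beyond unwinding the definition.

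For the converse, I would assume that $(X \rtimes_\alpha Y, c)$ is a $\cV$-monoid and that $a \otimes b \leq c \leq \wlex$ holds, and deduce that the diagram is a $U$-Schreier split extension. Here is precisely where Proposition~\ref{p:1} does all the work: its hypothesis (that $c$ turns $X \rtimes_\alpha Y$ into a $\cV$-monoid) is exactly what we are now assuming, so the proposition applies and tells us that \ref{item:10} and \ref{item:11} are equivalent. Since we are assuming condition~\ref{item:11}, namely $a \otimes b \leq c \leq \wlex$, we conclude condition~\ref{item:10}, i.e.\ that the diagram is a $U$-Schreier split extension. Combining the two directions yields the stated equivalence.

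I do not anticipate any genuine obstacle, as the corollary is a formal consequence of the proposition once one carefully separates the two roles played by the $\cV$-monoid hypothesis. The only subtlety worth flagging explicitly is that in Proposition~\ref{p:1} the condition ``$(X \rtimes_\alpha Y, c)$ is a $\cV$-monoid'' appears as a \emph{standing assumption} rather than as one of the equivalent conditions, whereas in the corollary it becomes one of the two things being compared; the whole content of the corollary is the recognition that, under the fixed inequality $a \otimes b \leq c \leq \wlex$, this standing assumption is precisely the extra ingredient needed to upgrade condition~\ref{item:11} of the proposition into a full $U$-Schreier split extension. A clean writeup should therefore be only a few lines, citing Proposition~\ref{p:1} as the single substantive step.
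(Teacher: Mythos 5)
Your proof is correct and matches the paper's intent exactly: the paper states Corollary~\ref{c:1} as an immediate consequence of Proposition~\ref{p:1}, and your two directions (the forward one by unwinding the definition of a $U$-Schreier split extension as a diagram in $\vmon$, the converse by applying Proposition~\ref{p:1} under the now-verified standing hypothesis) are precisely how that consequence is obtained.
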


We will now characterize under which conditions the pair $(X
\rtimes_\alpha Y, c)$ is a $\cV$-monoid, when $c$ is each of the
bounds identified in Proposition~\ref{p:1}, as well as when $c =
\llex$. 

Given a monoid action $\alpha: Y \times X \to X$, we consider the
function
\[\overline{\alpha}: Y \times X \to X \times Y, \qquad (y, x) \mapsto
  (\alpha(y, x), y).\]

The proof of the following result is included for the sake of
completeness, but we note that it is similar to that
of~\cite[Proposition~7.2]{ClementinoMontoli21}.

\begin{proposition}\label{p:3}
  Let $\alpha: Y \times X \to X$ be a monoid action. Then, the
  following are equivalent:
  \begin{enumerate}
  \item\label{item:1} $(X \rtimes_\alpha Y, a \otimes b)$ is a
    $\cV$-monoid,
  \item\label{item:2} $\overline{\alpha}$ is a $\cV$-functor.
  \end{enumerate}
\end{proposition}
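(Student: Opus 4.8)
The plan is to reformulate condition \ref{item:1} and then exploit the fact that $\overline{\alpha}$ is built from the multiplication of $X \rtimes_\alpha Y$ together with the canonical inclusions. As a $\cV$-category, $(X \rtimes_\alpha Y, a\otimes b)$ is exactly the product $\cV$-category of $(X,a)$ and $(Y,b)$, so \ref{item:1} holds precisely when the operation $\star$ is a $\cV$-functor $(X\times Y, a\otimes b)\otimes(X\times Y, a\otimes b)\to (X\times Y, a\otimes b)$. The observation driving the whole argument is that, writing $\iota_1: X\to X\times Y$, $x\mapsto(x,1)$ and $\iota_2: Y\to X\times Y$, $y\mapsto(0,y)$, the definition of $\star$ gives $(0,y)\star(x,1)=(\alpha(y,x),y)=\overline{\alpha}(y,x)$; that is, $\overline{\alpha}=\star\circ(\iota_2\times\iota_1)$.

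For \ref{item:1}$\Rightarrow$\ref{item:2} I would first check that $\iota_1$ and $\iota_2$ are $\cV$-functors: since $a(0,0),b(1,1)\ge k$, one has $a(x,x')\le a(x,x')\otimes b(1,1)=(a\otimes b)(\iota_1 x,\iota_1 x')$ and $b(y,y')\le a(0,0)\otimes b(y,y')=(a\otimes b)(\iota_2 y,\iota_2 y')$. Using $k=\top$ (so these values collapse to the unit), it follows that $\iota_2\times\iota_1: (Y\times X, b\otimes a)\to (X\times Y,a\otimes b)\otimes(X\times Y,a\otimes b)$ is a $\cV$-functor, the product relation on the domain and the tensor relation on the codomain coinciding on the relevant entries. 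Composing with $\star$, which is a $\cV$-functor by \ref{item:1}, shows that $\overline{\alpha}$ is a $\cV$-functor.

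For \ref{item:2}$\Rightarrow$\ref{item:1} I would unwind that $\overline{\alpha}$ being a $\cV$-functor means exactly
\[ b(y,y')\otimes a(x,x') \le a(\alpha(y,x),\alpha(y',x'))\otimes b(y,y') \qquad (\ast) \]
for all $x,x'\in X$ and $y,y'\in Y$. Using $k=\top$, so that $a(x,x)=b(y,y)=\top$ acts as the unit of $\otimes$, I would specialize $(\ast)$ to obtain (i) $a(x,x')\le a(\alpha(y,x),\alpha(y,x'))$ by taking $y=y'$, and (ii) $b(y,y')\le a(\alpha(y,x),\alpha(y',x))\otimes b(y,y')$ by taking $x=x'$. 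I would then verify the two inequalities of Lemma~\ref{l:8} for $\star$ on $(X\times Y,a\otimes b)$. For right translation by $(x'',y'')$, transitivity of $a$ together with \eqref{eq:4} yields $a(x,x')\otimes a(\alpha(y,x''),\alpha(y',x''))\le a(x+\alpha(y,x''),x'+\alpha(y',x''))$; inserting (ii) and using $b(y,y')\le b(y\cdot y'',y'\cdot y'')$ then gives $a(x,x')\otimes b(y,y')\le (a\otimes b)((x,y)\star(x'',y''),(x',y')\star(x'',y''))$. For left translation by $(x'',y'')$, \eqref{eq:4} and (i) give $a(x,x')\le a(\alpha(y'',x),\alpha(y'',x'))\le a(x''+\alpha(y'',x),x''+\alpha(y'',x'))$, and combined with $b(y,y')\le b(y''\cdot y,y''\cdot y')$ this is the required inequality. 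Lemma~\ref{l:8} then yields \ref{item:1}.

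The main obstacle is the right-translation step of \ref{item:2}$\Rightarrow$\ref{item:1}: because $\alpha$ appears in the first coordinate of $\star$, the two $\cV$-category factors of $a\otimes b$ become entangled there, and one must introduce the $b$-factor at the right place (via specialization (ii)) and chain transitivity with the two halves of \eqref{eq:4} in the correct order, respecting both the non-commutativity of $+$ and the position of the tensor factors. The other point requiring care is the systematic use of $k=\top$ to collapse the diagonal values $a(x,x)$ and $b(y,y)$ to the unit, which is what turns the mixed inequality $(\ast)$ into the clean monotonicity statements (i) and (ii).
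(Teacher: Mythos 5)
Your proof is correct, and while the direction \ref{item:1}$\Rightarrow$\ref{item:2} is essentially the paper's own argument (both rest on the identity $(0,y)\star(x,1)=\overline{\alpha}(y,x)$ and functoriality of the inclusions and of $\star$), your converse takes a genuinely different route. The paper proves \ref{item:2}$\Rightarrow$\ref{item:1} by a single direct chain: using commutativity of $\cV$ it rewrites the left-hand side of \eqref{eq:1} as $a(x_1,x_1')\otimes(b\otimes a)((y_1,x_2),(y_1',x_2'))\otimes b(y_2,y_2')$, applies functoriality of $\overline{\alpha}$ to the middle factor, and then applies functoriality of the operations of $(X,a)$ and $(Y,b)$ --- no auxiliary lemma is needed. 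You instead reduce to Lemma~\ref{l:8}, extracting from the functoriality inequality for $\overline{\alpha}$ the two specializations (i) and (ii) and verifying the left and right translation inequalities \eqref{eq:4} separately. This is valid, but note where the approaches differ in cost: collapsing the diagonal values $b(y,y)$ and $a(x,x)$ to the unit in your (i) and (ii) genuinely uses the section's standing assumption $k=\top$ (in a general quantale $v\otimes\top$ need not lie below $v$), whereas the paper's direct computation proves this implication over any commutative quantale; alternatively, your argument could be made pointedness-free by carrying the factors $b(y'',y'')$ and $a(x'',x'')$ along rather than discarding them, since they absorb into $b(y''y,y''y')$, etc.\ at the last step. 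What your decomposition buys is conceptual: your (i) isolates the fact that each $\alpha_y$ is a $\cV$-functor, a consequence the paper only extracts later (in the proof of Proposition~\ref{p:6}), at the price of a longer argument split into two translation checks.
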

\begin{proof}
  
  \ref{item:1} $\implies$ \ref{item:2}: Suppose that $(X
  \rtimes_\alpha Y, a \otimes b)$ is a $\cV$-monoid. Then, the
  following computations show that $\overline{\alpha}$ is a
  $\cV$-functor:
  \begin{align*}
    (b\otimes a)((y, x), (y', x'))
    &= a(0,0) \otimes  (b\otimes a)((y, x), (y', x')) \otimes b(1,1)
    \\ & = (a \otimes b) ((0, y), (0, y')) \otimes (a \otimes b)((x,
         1), (x',1))
    \\ & \leq (a\otimes b)(\overline{\alpha}(y, x),
         \overline{\alpha}(y', x')) \qquad \text{(because $(X
         \rtimes_\alpha Y, a \otimes b)$ is a $\cV$-monoid).}
  \end{align*}

  \ref{item:2} $\implies$ \ref{item:1}: We need to show that the
  operation on $X \rtimes_\alpha Y$ induces a $\cV$-functor, that is,
  that for all $x_1, x_1', x_2, x_2' \in X$ and $y_1, y_1', y_2, y_2'
  \in Y$, the following inequality holds:
  \begin{equation}
    (a\otimes b)((x_1, y_1), (x_1', y_1'))\otimes (a\otimes b)((x_2,
    y_2), (x_2', y_2')) \leq (a\otimes b)((x_1,y_1)(x_2,y_2), (x_1',
    y_1')(x_2',y_2')).\label{eq:1}
  \end{equation}
  Indeed, we have:
  \begin{align*}
    &\quad (a\otimes b)((x_1, y_1), (x_1', y_1'))\otimes (a\otimes b)((x_2,
      y_2), (x_2', y_2'))
    \\ & = a(x_1, x_1') \otimes (b \otimes a) ((y_1, x_2), (y_1', x_2'))
         \otimes b(y_2, y_2')
    \\ & \leq a(x_1, x_1') \otimes (a \otimes b)
         (\overline{\alpha}(y_1, x_2), \overline{\alpha}(y_1', x_2'))
         \otimes b(y_2, y_2') \qquad \text{(because
         $\overline{\alpha}$ is a $\cV$-functor)}
    \\ & \leq a(x_1 + \alpha(y_1, x_2), x_1' + \alpha(y_1', x_2'))
         \otimes b(y_1y_1', y_2y_2') \qquad \text{(because $(X, a)$
         and $(Y, b)$ are $\cV$-monoids)}
    \\ & = (a \otimes b)((x_1,y_1)(x_2,y_2), (x_1',
         y_1')(x_2',y_2')).\popQED \qed
  \end{align*}
\end{proof}

\begin{proposition}\label{p:2}
  Let $\alpha: Y \times X \to X$ be a monoid action. Then, the
  following are equivalent:
  \begin{enumerate}
  \item \label{item:4} $(X \rtimes_\alpha Y, \wlex)$ is a
    $\cV$-monoid,
  \item \label{item:5} the $\cV$-relation $\wlex$ is transitive and,
    for all $y_1, y_2, y_1', y_2' \in Y \setminus \{1\}$ satisfying
    $y_1y_2 = y_1'y_2' = y_1' y_2 = 1$, the following inequality
    holds:
    \begin{equation}
      b(y_1, y_1') \otimes b(y_2, y_2')  \leq \bigwedge_{x, x' \in X}
      a(x, x').\label{eq:13}
    \end{equation}
  \end{enumerate}
\end{proposition}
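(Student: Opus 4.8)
The plan is to reduce the statement that $(X\rtimes_\alpha Y,\wlex)$ is a $\cV$-monoid to two ingredients: that $\wlex$ is a $\cV$-category and that the operation $\star$ is a $\cV$-functor. Since $\wlex$ is always reflexive, the $\cV$-category condition is exactly transitivity, which appears verbatim in~\ref{item:5} and is governed by Lemma~\ref{l:3}. Thus the real content is to show that, assuming transitivity of $\wlex$, the $\cV$-functor condition
\[\wlex((x_1,y_1),(x_1',y_1'))\otimes\wlex((x_2,y_2),(x_2',y_2'))\leq\wlex\big((x_1,y_1)\star(x_2,y_2),(x_1',y_1')\star(x_2',y_2')\big)\]
is equivalent to~\eqref{eq:13}. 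I would work with this full two-variable form rather than the one-sided translations of Lemma~\ref{l:8}, as it makes the case analysis transparent. Write $L$ for the left-hand side and $R$ for the right-hand side, and recall that $\alpha(y,0)=0$ and that $b(y,y)=\top=k$ is the tensor unit.

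For \ref{item:4}$\Rightarrow$\ref{item:5}, transitivity is immediate from $(X\rtimes_\alpha Y,\wlex)$ being a $\cV$-category. To extract~\eqref{eq:13}, given $y_1,y_2,y_1',y_2'\in Y\setminus\{1\}$ with $y_1y_2=y_1'y_2'=y_1'y_2=1$, I would apply the $\cV$-functor condition with $x_2=x_2'=0$. Then both products have identity second coordinate, so $R=\wlex((x_1,1),(x_1',1))=a(x_1,x_1')$, while all four $y$-components are $\neq 1$, so $L=b(y_1,y_1')\otimes b(y_2,y_2')$. Letting $x_1,x_1'$ range freely yields $b(y_1,y_1')\otimes b(y_2,y_2')\leq\bigwedge_{x,x'}a(x,x')$, which is exactly~\eqref{eq:13}.

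For \ref{item:5}$\Rightarrow$\ref{item:4}, only the $\cV$-functor condition needs checking. The key split is whether $y_1y_2=y_1'y_2'=1$, in which case $R=a(x_1+\alpha(y_1,x_2),x_1'+\alpha(y_1',x_2'))$, or not, in which case $R=b(y_1y_2,y_1'y_2')$. Since $\wlex((x_i,y_i),(x_i',y_i'))\leq b(y_i,y_i')$ always holds, the $\cV$-monoid structure of $(Y,b)$ gives $L\leq b(y_1,y_1')\otimes b(y_2,y_2')\leq b(y_1y_2,y_1'y_2')$, which settles the case $R=b$ at once. When $R=a$ and all four $y$-components equal $1$, the inequality is precisely the $\cV$-functor condition of $(X,a)$. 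In the remaining situations, where $R=a$ but not all $y$-components are $1$, it suffices to prove $b(y_1,y_1')\otimes b(y_2,y_2')\leq\bigwedge_{x,x'}a(x,x')$, since $L\leq b(y_1,y_1')\otimes b(y_2,y_2')$ and $\bigwedge_{x,x'}a(x,x')\leq R$. Here $y_1y_2=y_1'y_2'=1$ forces $y_1=1\iff y_2=1$ and $y_1'=1\iff y_2'=1$, leaving three configurations. Tensoring with a reflexivity term $b(y_i,y_i)=\top$ and multiplying inside $(Y,b)$ produces comparisons such as $b(y_1,y_1')\leq b(1,y_1'y_2)$ and $b(y_2,y_2')\leq b(y_1'y_2,1)$. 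In the two configurations where a pair is of the form $(1,y')$ or $(y,1)$, these comparisons (together with commutativity of $\otimes$) collapse the product to $b(1,g)\otimes b(g,1)$ for a suitable $g\neq 1$, bounded by $\bigwedge_{x,x'}a(x,x')$ via Lemma~\ref{l:3}. In the configuration where all four are $\neq 1$, the comparisons give $b(y_1,y_1')\otimes b(y_2,y_2')\leq b(1,y_1'y_2)\otimes b(y_1'y_2,1)$; if $y_1'y_2\neq 1$ this is again bounded by Lemma~\ref{l:3}, while if $y_1'y_2=1$ we are exactly in the hypothesis of~\eqref{eq:13}, which supplies the bound directly.

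The hard part will be this last reduction. The naive guess is that one must hypothesize $b(y_1,y_1')\otimes b(y_2,y_2')\leq\bigwedge_{x,x'}a(x,x')$ for \emph{all} quadruples with $y_1y_2=y_1'y_2'=1$; the delicate point is recognizing that the $\cV$-monoid structure of $(Y,b)$ and the transitivity bound of Lemma~\ref{l:3} already force every such instance \emph{except} the single residual case $y_1'y_2=1$, which is precisely~\eqref{eq:13}. Isolating this one configuration as the unique obstruction — and confirming that it is genuinely not implied by transitivity — is the crux of the argument.
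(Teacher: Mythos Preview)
Your proposal is correct and follows essentially the same approach as the paper: both directions hinge on the same case analysis, the key bound $b(y_1,y_1')\otimes b(y_2,y_2')\leq b(1,y_1'y_2)\otimes b(y_1'y_2,1)$ obtained from the $\cV$-monoid structure of $(Y,b)$, Lemma~\ref{l:3} when $y_1'y_2\neq 1$, and~\eqref{eq:13} for the residual case $y_1'y_2=1$. Your organization (splitting first on whether $R$ lands in $a$ or $b$, and using the universal bound $\wlex\leq b$ to dispatch the $R=b$ case in one stroke) is slightly more streamlined than the paper's split on the form of the left-hand side, but the substance is identical.
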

\begin{proof}
  \ref{item:4} $\implies$ \ref{item:5}: If $(X \rtimes_\alpha Y,
  \wlex)$ is a $\cV$-monoid then $\wlex$ is transitive. Let us show
  that~\eqref{eq:13} holds. Let $y_1, y_2, y_1', y_2' \in Y \setminus
  \{1\}$ be such that $y_1y_2 = y_1'y_2' = 1$ and $x, x' \in X$. Then,
  since $(X \rtimes_\alpha Y, \wlex)$ is a $\cV$-monoid, we have
  \begin{align*}
    b(y_1, y_1') \otimes b(y_2, y_2')
    & = \wlex ((x, y_1), (x', y_1')) \otimes \wlex ((0, y_2), (0,
      y_2'))
    \\ & \leq \wlex ((x, y_1y_2), (x', y_1'y_2')) = a(x, x').
  \end{align*}
  Since $x, x' \in X$ are arbitrary, we have~\eqref{eq:13}.

  \ref{item:5} $\implies$ \ref{item:4}: We have that $(X
  \rtimes_\alpha Y, \wlex)$ is a $\cV$-monoid if, and only if, the
  following inequality holds:
  \begin{equation}
    \wlex ((x_1, y_1), (x_1', y_1')) \otimes \wlex ((x_2, y_2),
    (x_2', y_2')) \leq \wlex((x_1 + \alpha(y_1, x_2), y_1y_2), (x_1'
    + \alpha(y_1', x_2'), y_1'y_2')).\label{eq:11}
  \end{equation}
  We consider the following cases, according to the value $v$ of the
  left-hand side of~\eqref{eq:11}:
  \begin{itemize}
  \item If $v = a(x_1, x_1') \otimes a(x_2, x_2')$ then, we must have
    $y_1 = y_1' = y_2 = y_2' = 1$ and inequality~\eqref{eq:11} follows
    from $(X, a)$ being a $\cV$-monoid.
  \item If $v = a(x_1, x_1') \otimes b(y_2, y_2')$ then it is because
    we have $y_1 = y_1' = 1$ but we do not have $y_2 = y_2' =
    1$. Thus, we cannot either have $y_1y_2 = y_1'y_2' = 1$ and thus,
    the right-hand side of~\eqref{eq:11} is $b(y_1y_2, y_1'y_2') =
    b(y_2, y_2')$ which is greater than or equal to $v$.
  \item If $v = b(y_1, y_1') \otimes a(x_2, x_2')$ then the argument
    is similar to the one of the previous case.
  \item If $v = b(y_1, y_1') \otimes b(y_2, y_2')$ then it is because
    neither $y_1 = y_1' = 1$ nor $y_2 = y_2' = 1$. We consider the
    following three further cases:
    \begin{itemize}
    \item If $y_1 y_2 \neq 1$ or $y_1'y_2' \neq 1$, then the
      right-hand side of~\eqref{eq:9} is $b(y_1y_2, y_1'y_2')$ which,
      since $(Y, b)$ is a $\cV$-monoid, is greater than or equal
      to~$v$.
    \item If $y_1y_2 = y_1'y_2' = 1$, but $y_1'y_2 \neq 1$, then
      \begin{align*}
        v & = b(y_1, y_1') \otimes b(y_2, y_2')
        \\ & \leq b(y_1y_2, y_1'y_2) \otimes b(y_1'y_2, y_1'y_2')
             \qquad \text{(because $(X, b)$ is a $\cV$-monoid)}
        \\ & = b(1, y_1'y_2) \otimes b(y_1'y_2, 1)
        \\ & \leq a(x_1 + \alpha(y_1, x_2), x_1' + \alpha(y_1',
             x_2')) \qquad \text{(by Lemma~\ref{l:3}).}
      \end{align*}
    \item If $y_1y_2 = y_1'y_2' = y_1'y_2 = 1$, then we
      use~\eqref{eq:13}.\popQED
    \end{itemize}
  \end{itemize}
\end{proof}
Given a function $\alpha: Y \times X \to X$ and $y \in Y$, we let
$\alpha_y: X \to X$ be defined by $\alpha_y (x) = \alpha(y, x)$.
\begin{proposition}\label{p:6}
  Let $\alpha: Y \times X \to X$ be a monoid action. Then, the
  following are equivalent:
  \begin{enumerate}
  \item\label{item:3} $(X \rtimes_\alpha Y, \llex)$ is a $\cV$-monoid,
  \item\label{item:6} $\llex$ is transitive, for every $y \in Y$,
    $\alpha_y$ is a $\cV$-functor and, for every $y_0, y, y' \in Y$
    with $y \neq y'$, if $y_0y = y_0 y'$ then
    \begin{equation}
      b(y, y') \leq \bigwedge_{x, x' \in X} a(\alpha(y_0, x),
      \alpha(y_0, x'))\label{eq:20}
    \end{equation}
    and if $yy_0 = y'y_0$ then
    \begin{equation}
      b(y, y') \leq \bigwedge_{x, x' \in X} a(x, x').\label{eq:24}
    \end{equation}
  \end{enumerate}
\end{proposition}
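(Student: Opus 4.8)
The plan is to unwind the definition of a $\cV$-monoid. Since $\llex$ is always reflexive, $(X \rtimes_\alpha Y, \llex)$ is a $\cV$-monoid precisely when $\llex$ is transitive and the operation $\star$ induces a $\cV$-functor. Transitivity already appears verbatim in~\ref{item:6}, so the heart of the matter is translating the $\cV$-functoriality of $\star$ into the remaining conditions. By Lemma~\ref{l:8}, $\star$ is a $\cV$-functor if, and only if, $\llex$ is preserved by right translations,
\[ \llex((x_1, y_1), (x_1', y_1')) \le \llex((x_1, y_1)\star(x_2, y_2), (x_1', y_1')\star(x_2, y_2)), \]
and by left translations,
\[ \llex((x_2, y_2), (x_2', y_2')) \le \llex((x_1, y_1)\star(x_2, y_2), (x_1, y_1)\star(x_2', y_2')), \]
for all the elements involved. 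I would analyze these two families of inequalities separately, recalling throughout that $\alpha_y$ is a monoid endomorphism, so that $\alpha_y(0) = 0$.

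For the right-translation inequalities I would split on whether $y_1 = y_1'$. When $y_1 = y_1'$ both sides evaluate through $a$ and the inequality is exactly right-translation compatibility in $(X, a)$, hence automatic; when $y_1 \ne y_1'$ but $y_1 y_2 \ne y_1' y_2$ both sides evaluate through $b$ and it reduces to right-translation compatibility in $(Y, b)$, again automatic. The only genuine constraint arises when $y_1 \ne y_1'$ yet $y_1 y_2 = y_1' y_2$: here the inequality reads $b(y_1, y_1') \le a(x_1 + \alpha(y_1, x_2), x_1' + \alpha(y_1', x_2))$ for all $x_1, x_1', x_2$. Specializing $x_2 = 0$ (and using $\alpha_y(0) = 0$) shows this is equivalent to $b(y_1, y_1') \le \bigwedge_{x, x'} a(x, x')$, which is~\eqref{eq:24}; conversely~\eqref{eq:24} bounds in particular the single pair $(x_1 + \alpha(y_1, x_2), x_1' + \alpha(y_1', x_2))$, recovering the full inequality.

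The left-translation inequalities I would treat symmetrically, splitting on whether $y_2 = y_2'$. The subcase $y_2 \ne y_2'$ with $y_1 y_2 \ne y_1 y_2'$ is automatic from left-translation in $(Y, b)$. When $y_2 = y_2'$ the inequality reads $a(x_2, x_2') \le a(x_1 + \alpha(y_1, x_2), x_1 + \alpha(y_1, x_2'))$; setting $x_1 = 0$ isolates exactly the assertion that each $\alpha_y$ is a $\cV$-functor, while conversely left-translation compatibility in $(X, a)$ upgrades the $\alpha_y$-condition back to the general inequality. Finally, when $y_2 \ne y_2'$ but $y_1 y_2 = y_1 y_2'$, the inequality becomes $b(y_2, y_2') \le a(x_1 + \alpha(y_1, x_2), x_1 + \alpha(y_1, x_2'))$; taking $x_1 = 0$ yields $b(y_2, y_2') \le \bigwedge_{x, x'} a(\alpha(y_1, x), \alpha(y_1, x'))$, which is~\eqref{eq:20}, and the converse again follows from compatibility in $(X, a)$. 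Collecting the three genuine constraints —~\eqref{eq:24}, the $\cV$-functoriality of each $\alpha_y$, and~\eqref{eq:20} — together with transitivity of $\llex$ reproduces~\ref{item:6}.

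The bookkeeping of the case split is routine; the step that requires care is verifying that the specializations $x_2 = 0$ and $x_1 = 0$ (hence the endomorphism property $\alpha_y(0) = 0$) genuinely let one pass between the universally quantified translation inequalities and the compact $\bigwedge$-form conditions in \emph{both} directions, and that the case analysis on whether the products $y_i y_j$ collapse is exhaustive with no case double-counted. I would write the two directions of the equivalence simultaneously, reading each case as an equivalence rather than a single implication, since the forward direction proceeds by specialization and the backward direction by lifting through the $\cV$-monoid structure of $(X,a)$ and $(Y,b)$.
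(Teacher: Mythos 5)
Your proposal is correct, and it takes a genuinely different route from the paper's. The paper proves \ref{item:6}$\implies$\ref{item:3} by verifying the full tensor inequality~\eqref{eq:10} directly, splitting into cases according to whether $y_1 = y_1'$, $y_2 = y_2'$, and $y_1y_2 = y_1'y_2'$ (its forward direction is essentially the same specialization argument as yours: tensoring with diagonal elements $\llex((0,y_0),(0,y_0))$ plays the role of your translation specializations). You instead invoke Lemma~\ref{l:8} to replace $\cV$-functoriality of $\star$ by the two one-sided translation inequalities; this is legitimate in both directions, since transitivity of $\llex$ --- available from \ref{item:3} in one direction and assumed in \ref{item:6} in the other --- makes $(X \rtimes_\alpha Y, \llex)$ a $\cV$-category, which is exactly what Lemma~\ref{l:8} requires. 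What your route buys is a strictly simpler and cleaner case analysis: in a one-sided translation only one pair of second coordinates can differ, so each inequality splits into three cases, each equivalent (by specializing $x_2 = 0$ or $x_1 = 0$, using $\alpha_y(0) = 0$) to translation-compatibility in $(X,a)$ or $(Y,b)$, to $\cV$-functoriality of $\alpha_y$, to~\eqref{eq:20}, or to~\eqref{eq:24}. Notably, your reduction sidesteps the mixed configuration $y_1 \neq y_1'$, $y_2 \neq y_2'$, $y_1y_2 = y_1'y_2'$, in which~\eqref{eq:10} has left-hand side $b(y_1,y_1') \otimes b(y_2,y_2')$ but right-hand side evaluated through $a$; the paper's enumeration of cases does not actually cover this configuration (its last bullet assumes $y_1y_2 \neq y_1'y_2'$), whereas in your argument it never arises, because the transitivity of $\llex$ built into the converse direction of Lemma~\ref{l:8} glues the two one-sided translations through the intermediate point $(x_1',y_1')\star(x_2,y_2)$. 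The only cost is that your functoriality step leans on transitivity of $\llex$, but since transitivity is part of both \ref{item:3} and \ref{item:6}, nothing is lost.
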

\begin{proof}
  \ref{item:3} $\implies$ \ref{item:6}: If $(X \rtimes_\alpha Y,
  \llex)$ is a $\cV$-monoid, then $\llex$ is, by definition,
  transitive. Let $y \in Y$. Then, for all $x, x' \in X$, we have
  \begin{align*}
    a(x, x')
    & = \llex ((0, y), (0, y)) \otimes \llex((x, 1), (x',1))
    \\ & \leq \llex((\alpha(y, x), y), (\alpha(y, x'), y)) \qquad
         \text{(because  $(X \rtimes_\alpha Y,
         \llex)$ is a $\cV$-monoid)}
    \\ & = a(\alpha_y(x), \alpha_y(x')),
  \end{align*}
  and therefore, $\alpha_y$ is a $\cV$-functor. Now, we let $y_0, y,
  y' \in Y$ be such that $y \neq y'$, and pick any $x, x' \in X$. If
  $y_0y = y_0y'$ then
  \begin{align*}
    b(y, y')
    & = \llex((0, y_0), (0, y_0)) \otimes \llex ((x, y),(x',y'))
    \\ & \leq \llex ((\alpha(y_0, x), y_0y), (\alpha(y_0, x'), y_0y')) \qquad
         \text{(because  $(X \rtimes_\alpha Y,
         \llex)$ is a $\cV$-monoid)}
    \\ & = a(\alpha(y_0, x), \alpha(y_0, x')) \qquad \text{(because  $y_0y = y_0y'$)}
  \end{align*}
  and this proves \eqref{eq:20}. If, on the other hand, we have $y y_0 =
  y'y_0$, then
  \begin{align*}
    b (y, y')
    & = \llex ((x, y), (x', y')) \otimes \llex((0, y_0), (0, y_0))
    \\ &\leq  \llex((x, yy_0), (x', y'y_0))\qquad
         \text{(because  $(X \rtimes_\alpha Y,
         \llex)$ is a $\cV$-monoid)}
    \\ & = a(x, x') \qquad \text{(because  $y y_0 = y'y_0$)}
  \end{align*}
  which shows~\eqref{eq:24}.

  \ref{item:6} $\implies$ \ref{item:3}: Since $\llex$ is always
  reflexive, we have that $(X \rtimes_\alpha Y, \llex)$ is a
  $\cV$-category. Thus, it suffices to show that, for all $x_1, x_1',
  x_2, x_2' \in X$ and $y_1, y_1', y_2, y_2' \in Y$, the following
  inequality holds:
  \begin{equation}
    \label{eq:10}
     \llex ((x_1, y_1), (x_1', y_1')) \otimes \llex((x_2, y_2), (x_2',
    y_2')) \leq \llex ((x_1 + \alpha(y_1, x_2), y_1y_2), (x_1' +
    \alpha(y_1', x_2'), y_1'y_2')).
  \end{equation}
  For that, we consider the following cases:
  \begin{itemize}
  \item If $y_1 = y_1'$ and $y_2 = y_2'$, then \eqref{eq:10} is
    equivalent to
    \[a(x_1, x_1') \otimes a(x_2, x_2') \leq a(x_1 + \alpha(y_1, x_2),
      x'_1 + \alpha(y'_1, x'_2)).\]
    Using the fact that $\alpha_{y_1} = \alpha_{y_1'}$ is a
    $\cV$-functor and that $(X, a)$ is a $\cV$-monoid, we have
    \[a(x_1, x_1') \otimes a(x_2, x_2') \leq a(x_1, x_1') \otimes
      a(\alpha_{y_1}(x_2), \alpha_{y'_1}( x'_2)) \leq a(x_1 +
      \alpha(y_1, x_2), x'_1 + \alpha(y'_1, x'_2)),\] as required.
  \item If $y_1 = y_1'$, $y_2 \neq y_2'$ and $y_1y_2 = y_1'y_2'$ then
    \eqref{eq:10} is equivalent to
    \[a(x_1, x_1') \otimes b(y_2, y_2') \leq a(x_1 + \alpha(y_1, x_2),
      x'_1 + \alpha(y'_1, x'_2)),\]
    which holds because, by \eqref{eq:20}, the inequality
    \[a(x_1, x_1') \otimes b(y_2, y_2') \leq a(x_1, x_1') \otimes
      a(\alpha(y_1, x), \alpha(y_1', x'))\]
    holds and $(X \rtimes_\alpha, \llex)$ is a $\cV$-monoid.
  \item If $y_1 \neq y_1'$, $y_2 = y_2'$ and $y_1y_2 = y_1'y_2'$ then
    \eqref{eq:10} is equivalent to
    \[b(y_1, y_1') \otimes a(x_2, x_2') \leq a(x_1 + \alpha(y_1, x_2),
      x'_1 + \alpha(y'_1, x'_2)),\]
    which holds thanks to~\eqref{eq:24}.

  \item If $y_1y_2 \neq y_1'y_2'$ then at least one of the equalities
    $y_1 = y_1'$ and $y_2 = y_2'$ must fail. If $y_i \neq y_i'$ for
    exactly one $i \in \{1, 2\}$, then we have
    \begin{align*}
      \llex ((x_1, y_1), (x_1', y_1')) \otimes \llex((x_2, y_2), (x_2',
      y_2'))  & \leq b(y_i, y_i')
      \\ & \leq b(y_1y_2, y_1'y_2') \qquad \text{(by Lemma~\ref{l:8})}
      \\ & =  \llex ((x_1 + \alpha(y_1, x_2), y_1y_2), (x_1' +
           \alpha(y_1', x_2'), y_1'y_2')).
    \end{align*}
    If we have both $y_1 \neq y_1'$ and $y_2 \neq y_2'$
    then~\eqref{eq:10} is equivalent to
    \[b(y_1, y_1') \otimes b(y_2, y_2') \leq b(y_1y_2, y_1'y_2'),\]
    which holds because $(Y, b)$ is a $\cV$-monoid.\popQED
    \end{itemize}
\end{proof}
We have thus provided characterizations of the $\cV$-monoids $(X
\rtimes_\alpha Y, c)$ for three particular instances of~$c$. Now,
since the underlying diagram of monoids of a $U$-Schreier split extension
of $\cV$-monoids forms a Schreier split extension of monoids, in the
diagram~\eqref{eq:3}, there is a unique set map $q:Z \to X$
satisfying~\eqref{eq:2}. We recall that, in~\eqref{eq:8}, such a map
is the first projection $\pi_1: X \rtimes_\alpha Y \to X$. It is then
natural to ask for necessary and sufficient conditions for having that
this map is also a $\cV$-functor. We provide such in the next result.
\begin{lemma}
  If $(X, a) \overset{\iota_1} {\hookrightarrow} (X \rtimes_\alpha Y,
  c) \overset{\pi_2}{\underset{\iota_2}\rightleftarrows} (Y, b)$ is a
  $U$-Schreier split extension of $\cV$-monoids then, $\pi_1$ is a
  $\cV$-functor if, and only if, $c \leq a \wedge b$.
\end{lemma}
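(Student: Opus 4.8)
The plan is to characterize when $\pi_1: (X \rtimes_\alpha Y, c) \to (X, a)$ is a $\cV$-functor, which by definition means that for all $(x, y), (x', y') \in X \times Y$,
\[
c((x, y), (x', y')) \leq a(\pi_1(x, y), \pi_1(x', y')) = a(x, x').
\]
Since we are assuming $(X, a) \overset{\iota_1}{\hookrightarrow} (X \rtimes_\alpha Y, c) \overset{\pi_2}{\underset{\iota_2}\rightleftarrows} (Y, b)$ is a $U$-Schreier split extension, Proposition~\ref{p:1} already gives us $a \otimes b \leq c \leq \wlex$, and in particular condition~\ref{item:S1}, namely $c((x, y), (x', y')) \leq b(y, y')$. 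So $\pi_2$ is always a $\cV$-functor, and the content of the lemma is that adding the requirement that $\pi_1$ be a $\cV$-functor is exactly what forces $c \leq a \wedge b$.

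For the forward direction, suppose $\pi_1$ is a $\cV$-functor. Then $c((x, y), (x', y')) \leq a(x, x')$ for all arguments, which is one half of $c \leq a \wedge b$. The other half, $c \leq b$, is precisely~\ref{item:S1}, which holds because $\pi_2$ is a $\cV$-functor (part of being a $U$-Schreier split extension). Combining the two pointwise inequalities gives $c((x,y),(x',y')) \leq a(x,x') \wedge b(y,y') = (a \wedge b)((x,y),(x',y'))$, hence $c \leq a \wedge b$.

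For the converse, suppose $c \leq a \wedge b$. Then in particular $c((x, y), (x', y')) \leq a(x, x')$, since $a \wedge b \leq a$ pointwise, and this is exactly the condition for $\pi_1$ to be a $\cV$-functor.

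I expect there to be essentially no obstacle here: the statement is a direct unwinding of the definition of $\cV$-functor for $\pi_1$ together with the already-established fact (via the $U$-Schreier hypothesis and Proposition~\ref{p:1}) that $\pi_2$ is a $\cV$-functor, i.e.\ $c \leq b$. The only thing to be careful about is making explicit that $a \wedge b$ denotes the pointwise meet of the two $\cV$-relations, so that $c \leq a \wedge b$ genuinely decomposes as the conjunction of $c \leq a$ (read through $\pi_1$) and $c \leq b$ (read through $\pi_2$); once that is clear, the proof is immediate from the definitions.
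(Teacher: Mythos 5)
Your proof is correct and takes essentially the same route as the paper: both directions reduce to the pointwise reading of $\cV$-functoriality of $\pi_1$, namely $c((x,y),(x',y')) \leq a(x,x')$, combined with the bound $c \leq b$ which holds for any $U$-Schreier split extension, and the backward implication is immediate. The only (immaterial) difference is that you justify $c \leq b$ directly from~\ref{item:S1}, i.e.\ from $\pi_2$ being a $\cV$-functor, whereas the paper obtains it from Proposition~\ref{p:1} (giving $c \leq \wlex$) together with the pointwise inequality $\wlex((x,y),(x',y')) \leq b(y,y')$.
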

\begin{proof}
  By definition, $\pi_1$ is a $\cV$-functor if, and only if, for all
  $x, x' \in X$ and $y, y' \in Y$, the following inequality holds:
  \[c((x, y), (x', y')) \leq a(x, x').\]
  Noting that, by definition of $\wlex$, we have $\wlex ((x, y), (x',
  y'))\leq b(y, y')$ and, by Proposition~\ref{p:1}, we have $c \leq
  \wlex$, the forward implication follows. The backwards implication
  is trivial.
\end{proof}

In the remaining of the section, we will focus on the case where $Y$
is a group. When that is the case, Propositions~\ref{p:2}
and~\ref{p:6} may be considerably simplified as follows.

\begin{corollary}[of Proposition~\ref{p:2}]\label{c:3}
  Let $\alpha: Y \times X \to X$ be a monoid action and suppose that
  $Y$ is a group. Then, the following are equivalent:
  \begin{enumerate}
  \item\label{item:21} $(X \rtimes_\alpha Y, \wlex)$ is a $\cV$-monoid,
  \item\label{item:22} $\llex = \wlex$.
  \end{enumerate}
\end{corollary}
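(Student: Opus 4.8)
The plan is to translate both sides of the equivalence through the two characterizations already in hand---Proposition~\ref{p:2} for when $(X \rtimes_\alpha Y, \wlex)$ is a $\cV$-monoid, and Lemma~\ref{l:1} for when $\llex = \wlex$---so that the statement reduces to a comparison of conditions on $a$ and $b$, with the group structure of $Y$ entering only through the existence of inverses.

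For \ref{item:22} $\Rightarrow$ \ref{item:21}, I would apply Lemma~\ref{l:1} to split the hypothesis $\llex = \wlex$ into two cases. If $Y$ is trivial, then $Y \setminus \{1\} = \emptyset$ and both the transitivity criterion of Lemma~\ref{l:3} and the inequality~\eqref{eq:13} of Proposition~\ref{p:2} hold vacuously. If instead $a(x, x') = k$ for all $x, x'$, then $\bigwedge_{x, x'} a(x, x') = k = \top$, so the right-hand sides of those inequalities are $\top$ and they hold automatically. Either way Proposition~\ref{p:2} delivers \ref{item:21}; note that this direction does not even require $Y$ to be a group.

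For the converse \ref{item:21} $\Rightarrow$ \ref{item:22}---where the group hypothesis does the real work---I would once more reduce to Lemma~\ref{l:1}, so that it suffices to prove $a(x, x') = k$ for all $x, x'$ whenever $Y$ is non-trivial. Choosing $y \in Y \setminus \{1\}$, its inverse $y^{-1}$ again lies in $Y \setminus \{1\}$, and the quadruple $y_1 = y_1' = y$, $y_2 = y_2' = y^{-1}$ satisfies $y_1 y_2 = y_1' y_2' = y_1' y_2 = 1$, hence is admissible in~\eqref{eq:13}. Reflexivity of $b$ together with $k = \top$ collapses the left-hand side $b(y, y) \otimes b(y^{-1}, y^{-1})$ to $k \otimes k = k$, so~\eqref{eq:13} forces $k \leq a(x, x')$; since $a(x, x') \leq \top = k$ always holds, we obtain $a \equiv k$ and conclude by Lemma~\ref{l:1}.

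The only subtle step is the choice of the admissible quadruple in the forward direction: one needs all four entries distinct from $1$ and subject to $y_1 y_2 = y_1' y_2' = y_1' y_2 = 1$, and the diagonal choice built from $y$ and $y^{-1}$ is precisely what makes the left-hand side of~\eqref{eq:13} reduce to the unit, thereby trivializing $a$. The rest is routine bookkeeping using the earlier results and the identity $k = \top$.
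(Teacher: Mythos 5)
Your proof is correct and follows essentially the same route as the paper: both directions reduce to Lemma~\ref{l:1} and Proposition~\ref{p:2}, and the forward implication uses exactly the same admissible quadruple $y_1 = y_1' = y$, $y_2 = y_2' = y^{-1}$ in~\eqref{eq:13} to force $a \equiv k$. Your additional remark that the backward implication does not need $Y$ to be a group is a correct, if minor, refinement of the paper's argument.
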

\begin{proof}
  We recall that, by Lemma~\ref{l:1}, $\llex = \wlex$ if, and only if,
  either $Y$ is trivial or $a(x, x') = k$ for all $x, x' \in X$. Now,
  if $(X \rtimes_\alpha Y, \wlex)$ is a $\cV$-monoid and $Y$ is
  non-trivial then we may pick $y \in Y \setminus \{1\}$ and, by
  Proposition~\ref{p:2}, \eqref{eq:13} holds for $y_1 = y_1' = y$ and
  $y_2 = y_2' = y^{-1}$, which yields $a(x, x') = k$ for all $x, x'
  \in X$. This shows that \ref{item:21} implies
  \ref{item:22}. Conversely, if $\llex = \wlex$ then either $Y$ is
  trivial and $\wlex = a$, or $a(x, x') = k$ for all $x, x' \in X$. In
  either case, we have that $\wlex$ is transitive. Furthermore, it is
  clear that \eqref{eq:13} holds. Thus, by Proposition~\ref{p:2}, we
  may conclude that $(X \rtimes_\alpha Y, \wlex)$ is a $\cV$-monoid,
  as required.
\end{proof}
\begin{corollary}[of Proposition~\ref{p:6}]\label{c:2}
  Let $\alpha: Y \times X \to X$ be a monoid action and suppose that
  $Y$ is a group. Then, the following are equivalent:
  \begin{enumerate}
  \item $(X \rtimes_\alpha Y, \llex)$ is a $\cV$-monoid,
  \item $\llex$ is transitive and $\alpha_y$ is a $\cV$-functor for
    all $y \in Y$.
  \end{enumerate}
\end{corollary}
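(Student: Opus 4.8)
The plan is to specialize Proposition~\ref{p:6} to the case where $Y$ is a group and show that the two additional conditions on~$\alpha$ appearing in item~\ref{item:6}---namely the inequalities~\eqref{eq:20} and~\eqref{eq:24}---become vacuous. The equivalence then reduces exactly to the transitivity of $\llex$ together with each $\alpha_y$ being a $\cV$-functor, which is the content of the corollary. So the whole task is to argue that the quantifiers defining~\eqref{eq:20} and~\eqref{eq:24} are never triggered.

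First I would recall the form of the hypotheses in~\eqref{eq:20} and~\eqref{eq:24}. Condition~\eqref{eq:20} must hold for all $y_0, y, y'$ with $y \neq y'$ \emph{and} $y_0 y = y_0 y'$, while~\eqref{eq:24} must hold for all $y_0, y, y'$ with $y \neq y'$ \emph{and} $y y_0 = y' y_0$. The key observation is that, since $Y$ is a group, each $y_0$ is both left- and right-cancellable. Hence $y_0 y = y_0 y'$ forces $y = y'$ by left multiplication by $y_0^{-1}$, and similarly $y y_0 = y' y_0$ forces $y = y'$ by right multiplication by $y_0^{-1}$. In either case the standing assumption $y \neq y'$ is contradicted, so there are simply no triples $(y_0, y, y')$ satisfying the premises. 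Both universally quantified conditions are therefore vacuously true.

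With this in hand the proof is a direct translation. For the forward implication, if $(X \rtimes_\alpha Y, \llex)$ is a $\cV$-monoid then Proposition~\ref{p:6} gives that $\llex$ is transitive and that every $\alpha_y$ is a $\cV$-functor, which is precisely the second item. For the converse, assuming $\llex$ is transitive and each $\alpha_y$ is a $\cV$-functor, item~\ref{item:6} of Proposition~\ref{p:6} also requires~\eqref{eq:20} and~\eqref{eq:24}; but these hold vacuously by the cancellation argument above. Hence all the conditions of item~\ref{item:6} are met and Proposition~\ref{p:6} yields that $(X \rtimes_\alpha Y, \llex)$ is a $\cV$-monoid.

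I do not anticipate a genuine obstacle here, since the result is an immediate corollary; the only point requiring a moment's care is making explicit that it is precisely the cancellation available in a group that voids the premises of~\eqref{eq:20} and~\eqref{eq:24}, rather than the conclusions somehow being automatically satisfied. I would state the cancellation step cleanly and then simply invoke Proposition~\ref{p:6} in both directions.
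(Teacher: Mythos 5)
Your proof is correct and is essentially identical to the paper's: both reduce the statement to Proposition~\ref{p:6} and observe that, since a group has left and right cancellation, no triple $(y_0, y, y')$ with $y \neq y'$ can satisfy $y_0 y = y_0 y'$ or $y y_0 = y' y_0$, so conditions~\eqref{eq:20} and~\eqref{eq:24} hold vacuously. Your write-up merely makes the cancellation step more explicit than the paper's one-line argument.
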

\begin{proof}
  This is a trivial consequence of Proposition~\ref{p:6} as, when $Y$
  is a group, there are no $y_0, y, y' \in Y$ for which $y\neq y'$ and
  $y_0y = y_0y'$ or $yy_0 = y'y_0$.
\end{proof}
We have already identified necessary and sufficient conditions for
having that $\llex$ and $\wlex$ are transitive $\cV$-relations. Along
the same lines, we may also show that, in the setting of
$\cV$-groups, the condition identified
in~\cite[Theorem~7.4]{ClementinoMontoli21} is necessary and sufficient
for $\llex$ being transitive. More precisely, if $X$ and $Y$ are
groups, then $\llex$ is transitive if, and only if, the inequality
\[b(y, 1) \otimes b(1, y) \leq a(x, 0)\]
holds for every $x \in X$ and $y \in Y \setminus \{1\}$.  Thus,
Corollary~\ref{c:2} is a generalization
of~\cite[Theorem~7.4]{ClementinoMontoli21}. On the other hand,
Corollaries~\ref{c:3} and~\ref{c:2} also imply that, in the setting of
$\cV$-groups, if $(X \rtimes_\alpha Y, \wlex)$ is a $\cV$-monoid, then
the relations $\llex$ and $\wlex$ coincide. That is no surprise as
in~\cite[Proposition~7.6]{ClementinoMontoli21} it is shown that
$\llex$ is an upper bound of all relations $c$ turning $(X
\rtimes_\alpha Y, c)$ into a $\cV$-group. The next example shows that,
unlike what happens for $\cV$-groups, considering the relation $\wlex$
is not redundant for $\cV$-monoids in general.

\begin{example}
  Let $\cV = \two$, so that $\vmon$ is the category of preordered
  monoids. We consider the following preordered monoids:
  \begin{itemize}
  \item $\NN$ is the monoid of natural numbers equipped with the usual
    order relation,
  \item $\dot \NN$ is the monoid of natural numbers equipped with the
    preorder $\dot \leq$ defined by
    \[0 \,\dot\leq\, n, \text{ for all $n \in \NN$,}\qquad \text{and}
      \qquad n \,\dot \leq\, m, \text{ for all $n, m \in \NN \setminus
        \{0\}$}.\]
  \end{itemize}
  It is easy to verify that both $\NN$ and $\dot \NN$ are indeed
  preordered monoids and, using Lemma~\ref{l:3} we may also check that
  $\wlex$ is transitive and thus, $(\NN \times \dot \NN, \leq_\wlex)$
  is a $\two$-category (or preordered set). Moreover, the
  lexicographic and weak lexicographic relations do not coincide in
  this case: for instance, $(2,2)$ is below $(1,2)$ is the weak
  lexicographic relation, but not in the lexicographic one. Finally,
  we check that $(\NN \times \dot \NN, \leq_\wlex)$ is a preordered
  monoid. First observe that
  \[(m, n) \leq_\wlex (m', n') \iff (n = n' = 0 \text{ and } m \leq
    m') \text{ or } (n' \neq 0).\]
  It is then clear that $\leq_\wlex$ is invariant by shifting, as
  required.
\end{example}

\section{Group-like behaved quantale enriched
  monoids}\label{sec:vmon-star}

A crucial and useful property in the study of preordered groups is the
fact that the preorder relation of a preordered group is completely
determined by its cone of positive elements, in the following sense:
If $(G, \leq, +)$ is a preodered group and $P_G = \{x \in G \mid x
\geq 0\}$ is the cone of positive elements of $G$, then $x \leq y$ if,
and only if, $y \in P_G + x$.  That is no longer the case for
preordered monoids as witnessed by
\cite[Example~1]{Martins-FerreiraSobral21}. Indeed, if $(M, +, \leq)$
is a preordered monoid, $P_M = \{x \in M \mid x \geq 0\}$ is its cone
of positive elements, and $\leq_{P_M}$ is the preorder on~$M$ defined
by
\begin{equation}
  x \leq_{P_M} y \iff y \in P_M+ x,\label{eq:6}
\end{equation}
then $(M, +, \leq_{P_M})$ is a preordered monoid if, and only if,
$P_M$ is a right normal submonoid of~$M$
\cite[Proposition~2]{Martins-FerreiraSobral21} and this condition does
not even guarantee that $\leq_{P_M}$ is the preorder on~$M$
\cite[page~5]{Martins-FerreiraSobral21}. Given the importance, in the
context of preordered groups, of having that the relations
$\leq_{P_M}$ and $\leq$ coincide, in~\cite{Martins-FerreiraSobral21}
the authors restrict their study of split extensions to those
preordered monoids for which that is the case.

In this section we will start by investigating which property can play
the role of right normality in the more general context of
$\cV$-monoids and, in the spirit of~\cite{Martins-FerreiraSobral21},
we will restrict our study to the subclass of $\cV$-monoids that, in a
sense that we will make precise soon, behave like $\cV$-groups.

Let $(M, \leq, +)$ be a preordered monoid and let $P_M = \{x \in M
\mid x \geq 0\}$ be its cone of positive elements. Then, seeing $M$ as
a $\two$-monoid $(M, a, +)$, for \[a(x, y) =
\begin{cases}
  1, \text{ if $x \leq y$,} \\ 0, \text{ otherwise},
\end{cases}\]
we have that $P_M$ is the preimage of $1 = \top$ under the projection
$a(0, \_): M \to \two$. This very simple observation opens the door to
a generalization of the results of~\cite{Martins-FerreiraSobral21} to
the setting of $\cV$-monoids. Indeed, we will now focus on those
$\cV$-monoids $(X, a)$ whose $\cV$-relation~$a$ is determined by the
projection $a(0, \_): X \to V$.

Given a $\cV$-monoid $(X, a)$, we will denote by $P_a$ the map $P_a :
X \to V$ defined by
\begin{equation}
  P_a(x) = a(0, x),\label{eq:23}
\end{equation}
for all $x \in X$. We note that, in the case where $(X, a)$ is a
$\cV$-group, the $\cV$-relation $a$ is completely determined by its
projection $P_a$. Indeed, by Lemma~\ref{l:8}, we have
\[a(x, y) \leq a (0, y-x) \leq a(x, y)\] and, therefore, the equality
\[a(x, y) = P_a(y - x)\]
holds. More generally, we will consider those $\cV$-monoids $(X, a)$
satisfying
\begin{equation}
  a(x, y) = \bigvee \{P_a(w) \mid y = w + x\},\label{eq:16}
\end{equation}
for all $x, y \in X$.  We observe that, in the case where $X$ is a
group, the right-hand side of~\eqref{eq:16} is simply $P_a(y-x)$.  We
further observe that, for a $\cV$-category $(X, a)$, the function $P =
P_a$ satisfies the following two properties:
\begin{enumerate}[label = (M.\arabic*)]
\item\label{item:M1} $k \leq P(0)$,
\item\label{item:M2} $P(x) \otimes P(y) \leq P(x + y)$, for all $x, y
  \in X$.
\end{enumerate}
These two properties turn out to be crucial when defining a
$\cV$-monoid structure on a given monoid~$X$ out of a function $X \to
V$. Indeed, such a function $P: X \to V$, we consider the
$\cV$-relation $a_P: X \times X \to V$ defined by
\begin{equation}
  a_P(x, y) = \bigvee \{P(w) \mid y = w + x\}.\label{eq:21}
\end{equation}

\begin{proposition}\label{p:4}
  Let $X$ be a monoid, and let $P: X \to V$ be a function. Then,
  \begin{enumerate}
  \item\label{item:18} $a_P$ is reflexive if, and only if, $P$
    satisfies \ref{item:M1};
  \item\label{item:19} $a_P$ is transitive if, and only if, $P$
    satisfies \ref{item:M2};
  \item\label{item:20} if $(X, a_P)$ is a $\cV$-category, then the
    monoid operation on $X$ is a $\cV$-functor if, and only if, for
    all $x, z \in X$, $P$ satisfies
    \begin{equation}
    P(x) \leq \bigvee \{P(w) \mid z + x = w +
    z\}.\tag*{(M.3)}\label{item:M3}
  \end{equation}
  \end{enumerate}
  In particular, $(X, a_P)$ is a $\cV$-monoid if, and only if, $P$
  satisfies properties \ref{item:M1}, \ref{item:M2}, and
  \ref{item:M3}.
\end{proposition}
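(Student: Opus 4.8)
The plan is to prove the three equivalences in Proposition~\ref{p:4} by working directly with the definition of $a_P$ and checking, in each case, that the relevant $\cV$-category axiom is equivalent to the stated condition on $P$. Recall that $a_P(x,y) = \bigvee\{P(w) \mid y = w + x\}$, so the key mechanism throughout is the interplay between the join over all ``witnesses'' $w$ satisfying $y = w + x$ and the distributivity law~\eqref{eq:7} of the quantale.

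For part~\ref{item:18}, I would compute $a_P(x,x) = \bigvee\{P(w) \mid x = w + x\}$; since $w = 0$ is always a valid witness ($x = 0 + x$), we have $P(0) \leq a_P(x,x)$, and conversely reflexivity $k \leq a_P(x,x)$ for all $x$ specializes at $x = 0$ to $k \leq a_P(0,0)$, which (again using $w=0$, and noting $0 = w+0 \iff w = 0$) forces $k \leq P(0)$. So \ref{item:M1} is equivalent to reflexivity. For part~\ref{item:19}, transitivity reads $a_P(x,y) \otimes a_P(y,z) \leq a_P(x,z)$. Expanding both joins on the left and distributing $\otimes$ over $\bigvee$ via~\eqref{eq:7}, the left side becomes $\bigvee\{P(u) \otimes P(v) \mid y = u+x,\ z = v+y\}$. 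Whenever $y = u+x$ and $z = v+y$ we get $z = v + u + x$, so $P(u)\otimes P(v) \leq P(v+u) \leq a_P(x,z)$ using \ref{item:M2}; this gives the forward direction. Conversely, to recover \ref{item:M2} one specializes at $x = 0$, $y = u$, $z = u+v$ (or a similar choice) and isolates a single term of the join, exploiting that $u = u + 0$ and $u + v = v + u$ when reading witnesses appropriately — the bookkeeping here requires a little care because $X$ need not be commutative, so I would choose the specialization so that the witnesses are forced.

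Part~\ref{item:20} is where I expect the real obstacle. Assuming $(X,a_P)$ is a $\cV$-category, I would use Lemma~\ref{l:8}, which reduces ``the operation is a $\cV$-functor'' to the two one-sided inequalities $a(x,y) \leq a(x+z, y+z)$ and $a(x,y) \leq a(z+x, z+y)$. The second (left-translation) inequality should hold automatically: if $y = w+x$ then $z+y = z+w+x$, but this does not directly produce a witness of the form $z+y = w' + (z+x)$ unless $X$ is commutative, so I must be careful about which inequality is free and which encodes~\ref{item:M3}. The substantive content is the right-translation inequality $a_P(x,y) \leq a_P(x+z, y+z)$: a witness $y = w+x$ must be converted into witnesses for $y+z = w'+(x+z)$, i.e.\ elements $w'$ with $w' + x + z = w + x + z$; condition~\ref{item:M3} (applied with the roles of variables suitably matched) is exactly what guarantees $P(w) \leq \bigvee\{P(w') \mid w'+(x+z) = w+x+z\}$, and summing over all witnesses $w$ of $y$ yields the desired inequality. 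The hard part will be matching the quantifier structure of~\ref{item:M3} (which is stated with the pattern $z+x = w+z$) to the translation inequality and confirming that the converse specialization — taking $y = x + z$, or rather choosing $x, z$ so that $w$ ranges over the single witness $P(x)$ on the left — cleanly recovers \ref{item:M3}; this is a matter of selecting the right instance rather than any deep difficulty. The final ``in particular'' clause is then immediate by combining \ref{item:18}--\ref{item:20}, since being a $\cV$-monoid means exactly that $(X,a_P)$ is a reflexive, transitive $\cV$-category whose operation is a $\cV$-functor.
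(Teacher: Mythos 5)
Your parts (a) and (b) follow essentially the paper's own argument: witness bookkeeping plus distributivity of $\otimes$ over $\bigvee$, with the converse of (b) obtained by specializing transitivity at a middle element. The correct specialization is $x=0$, $y=v$, $z=u+v$, which gives $P(v)\otimes P(u)\le a_P(0,v)\otimes a_P(v,u+v)\le a_P(0,u+v)=P(u+v)$; the noncommutativity of $X$ that worries you is then harmless because it is the \emph{quantale} tensor that gets swapped, and the paper assumes $\cV$ commutative throughout. So far, so good.

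The genuine gap is in part (c): you have the two translation inequalities of Lemma~\ref{l:8} exactly backwards. The inequality that holds automatically is the \emph{right}-translation one, $a_P(x,y)\le a_P(x+z,y+z)$: if $y=w+x$ then $y+z=w+(x+z)$, so the same $w$ is a witness, and no conversion is needed (your proposed use of \ref{item:M3} there asks for $w'$ with $w'+x+z=w+x+z$, which $w'=w$ satisfies trivially). The substantive inequality is the \emph{left}-translation one, $a_P(x,y)\le a_P(z+x,z+y)$: a witness $y=w+x$ gives $z+y=z+w+x$, and one needs some $w'$ with $z+w=w'+z$; this is precisely what \ref{item:M3}, applied to the pair $(w,z)$, provides. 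As written, your plan spends \ref{item:M3} on a trivially true inequality and leaves unproven the left-translation inequality, which you first declared automatic and then correctly noticed is not, without ever resolving it. The same swap infects your forward direction: \ref{item:M3} is recovered from the left-translation instance $P(x)=a_P(0,x)\le a_P(z,z+x)=\bigvee\{P(w)\mid z+x=w+z\}$, whereas your suggestion of ``taking $y=x+z$'' (right translation of $(0,x)$ by $z$) only yields the vacuous $P(x)\le\bigvee\{P(w)\mid x+z=w+z\}$. The repair is simple --- interchange the roles of the two inequalities --- but the plan as stated does not prove part (c).
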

\begin{proof}
  Noticing that $P_{a_P} = P$, if $a_P$ is reflexive and transitive
  then $P$ satisfies \ref{item:M1} and \ref{item:M2},
  respectively. Thus, the forward implications of \ref{item:18} and
  \ref{item:19} hold. Suppose that \ref{item:M1} holds. Then, $a_P$ is
  reflexive because
  \[a_P (x, x) = \bigvee \{P(w) \mid x = w + x \} \geq P(0)
    \overset{\text{\ref{item:M1}}}\geq k.\]
  If \ref{item:M2} holds, then $a_P$ is transitive because
  \begin{align*}
    a_P(x, y) \otimes a_P(y, z)
    & = (\bigvee \{P(w) \mid y = w + x\}) \otimes (\bigvee \{P(w) \mid
      z = w + y\})
    \\ & = \bigvee \{P(w) \otimes P(w') \mid  y = w + x\text{ and }z
         =  w' + y \}
    \\ & = \bigvee \{P(w') \otimes P(w) \mid  y = w + x\text{ and }z
         =  w' + y \}
    \\ & \overset{\text{\ref{item:M2}}}\leq \bigvee \{P(w' + w) \mid  y = w + x \text{ and }z
         = w' + y\}
    \\ & \leq \bigvee \{P(w' + w) \mid  z = w' + w + x\} = a_P(x,
         z),
  \end{align*}
  and this finishes the proofs of \ref{item:18} and \ref{item:19}.  
    
  Let us now prove~\ref{item:20}. Suppose that $(X, a_P)$ is a
  $\cV$-category. If the monoid operation on~$X$ is a $\cV$-functor
  then, using the second inequality stated in Lemma~\ref{l:8}, for all
  $x, z \in X$, we have
  \[P(x) = a_P(0, x) \leq a_P(z, z + x) = \bigvee\{P(w) \mid z + x = w
    + z\}.\]
  Conversely, let us verify that the inequalities~\eqref{eq:4} of
  Lemma~\ref{l:8} hold. First note that we always have $a_P(x, y) \leq
  a_P(x + z, y + z)$. Indeed,
  \[a_P(x, y) = \bigvee \{P(w) \mid y = w + x\} \leq \bigvee \{P(w)
    \mid y + z = w + x + z\} = a_P(x+z, y+z).\]
  Using~\ref{item:M3}, we may deduce that
  \begin{align*}
    a_P(x, y)
    & = \bigvee \{P(w) \mid y = w + x\}
    \\ & \leq \bigvee \{\bigvee \{P(w')\mid z + w = w' + z\} \mid y =
         w + x\}
    \\ & =  \bigvee \{P(w')\mid z + w = w' + z \text{ and } y =
         w + x\}
    \\ & \leq \bigvee \{P(w')\mid  z + y = w' + z + x\} = a_P(z + x, z
         + y). \popQED\qed
  \end{align*}
\end{proof}

We remark that, in the case of preordered monoids, having $P(x) \leq
\bigvee \{P(w) \mid z + x = w + z\}$ for all $x, z \in X$ means that,
if $0 \leq x$ and $z \in X$, then there exists $w$ such that $0 \leq
w$ and $z + x = w + z$. In other words, this is to say that $z + P
\subseteq P + z$, that is, $P$ is right normal.

We will denote by $\vmon^*$ the full subcategory of $\vmon$ determined
by the $\cV$-monoids $(X, a)$ satisfying $a = a_{P_a}$
(recall~\eqref{eq:23} and~\eqref{eq:21}). In particular, by
Proposition~\ref{p:4}, when that is the case, $P_a$ must
satisfy \ref{item:M1}--\ref{item:M3}.

The remaining of this section will be devoted to the characterization
of the $U$-Schreier split extensions of $\cV$-monoids of the form
\[(X, a) \overset{k} {\hookrightarrow} (Z, c)
  \overset{p}{\underset{s}\rightleftarrows} (Y, b),\]
where $(X, a)$, $(Y, b)$, and $(Z, c)$ belong to $\vmon^*$.
\begin{definition}\label{sec:V-action}
  Let $(X, a)$ and $(Y, b)$ be objects of $\vmon^*$. A
  \emph{$\cV$-enriched action} of $(Y, b)$ on $(X, a)$ is a pair
  $(\alpha, P)$, where $\alpha: Y \times X \to X$ is a monoid action
  and $P: X \times Y \to V$ is a function satisfying the following
  axioms:
  \begin{enumerate}[label = (E.\arabic*)]\setcounter{enumi}{-1}
  \item\label{item:E0} $P(x, y) \leq P_b(y)$ for all $(x, y) \in X \times Y$,
  \item\label{item:E1} $P_b(y) \leq P(0, y)$ for all $y \in Y$,
  \item\label{item:E2} $P_a(x) = P(x, 1)$ for all $x \in X$,
  \item\label{item:E3} $P(x, y) \otimes P(x', y') \leq P(x + \alpha(y,
    x'), yy')$, for all $(x, y), (x', y') \in X \times Y$,
  \item\label{item:E4} $P(x, y) \leq \bigvee \{P(x', y') \mid x_0 +
    \alpha(y_0, x) = x' + \alpha(y', x_0) \text{ and } y_0 y =
    y'y_0\}$, for all $(x, y), (x_0, y_0) \in X \times Y$.
  \end{enumerate}
\end{definition}

We note that properties~\ref{item:E3} and~\ref{item:E4} are nothing
but properties~\ref{item:M2} and \ref{item:M3}, respectively, stated
for the function $P: X \times Y \to V$ and for the monoid $X
\rtimes_\alpha Y$.

\begin{theorem}\label{t:2}
  Let $(X, a)$ and $(Y, b)$ be objects of $\vmon^*$. Then, up to
  isomorphism, there is a one-to-one correspondence between $U$-Schreier
  split extensions $(X, a) \overset{k} {\hookrightarrow}(Z, c)
  \overset{p}{\underset{s}\rightleftarrows} (Y, b)$ of $\cV$-monoids,
  with $(Z, c)$ lying in $\vmon^*$, and $\cV$-enriched actions of $(Y,
  b)$ on $(X, a)$.
\end{theorem}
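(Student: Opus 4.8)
The plan is to set up the bijection by unwinding what a $U$-Schreier split extension with $(Z,c) \in \vmon^*$ means in terms of data on the semidirect product, and then matching this against the five axioms \ref{item:E0}--\ref{item:E4}. By Theorem~\ref{t:1}, the underlying monoid diagram is isomorphic to the split extension built from a monoid action $\alpha : Y \times X \to X$, so up to isomorphism $Z = X \rtimes_\alpha Y$ with $k = \iota_1$, $s = \iota_2$, $p = \pi_2$, and $q = \pi_1$. Since we further require $(Z,c) \in \vmon^*$, the $\cV$-relation $c$ is of the form $a_{P_c}$ for $P_c = P = c((0,1),\_) : X \times Y \to V$. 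Thus a $U$-Schreier split extension with $(Z,c) \in \vmon^*$ is precisely a monoid action $\alpha$ together with a function $P : X \times Y \to V$ such that $(X \rtimes_\alpha Y, a_P)$ is a $\cV$-monoid \emph{and} conditions \ref{item:S1}--\ref{item:S3} of Proposition~\ref{p:1} hold. The two maps of the correspondence are then the obvious ones: from a $U$-Schreier split extension read off $\alpha$ (as in Theorem~\ref{t:1}) and $P = P_c$; from a $\cV$-enriched action $(\alpha, P)$ build $(X \rtimes_\alpha Y, a_P)$.

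First I would show that the pair $(\alpha, P)$ extracted from a $U$-Schreier split extension in $\vmon^*$ satisfies the five axioms. By Proposition~\ref{p:4}, since $(X \rtimes_\alpha Y, a_P)$ is a $\cV$-monoid, $P$ automatically satisfies \ref{item:M1}, \ref{item:M2}, \ref{item:M3}; the remark following Definition~\ref{sec:V-action} identifies \ref{item:M2} with \ref{item:E3} and \ref{item:M3} with \ref{item:E4}, so these two axioms come for free. Axioms \ref{item:E0}, \ref{item:E1}, \ref{item:E2} are then exactly the translations of the structural conditions \ref{item:S1}, \ref{item:S3}, \ref{item:S2}, respectively, into the language of $P = c((0,1),\_)$: for instance \ref{item:E0} $P(x,y) = c((0,1),(x,y)) \leq b(1,y) = P_b(y)$ is \ref{item:S1} applied at the basepoint, \ref{item:E1} is \ref{item:S3} applied likewise, and \ref{item:E2} $P(x,1) = c((0,1),(x,1)) = a(0,x) = P_a(x)$ is \ref{item:S2}. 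I would verify each of these reductions carefully, using that $(X,a)$ and $(Y,b)$ lie in $\vmon^*$ so that $a = a_{P_a}$ and $b = b_{P_b}$.

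For the converse I would start from a $\cV$-enriched action $(\alpha, P)$ and set $c = a_P$ on $X \rtimes_\alpha Y$. Axioms \ref{item:E3}, \ref{item:E4} give \ref{item:M2}, \ref{item:M3}; combined with \ref{item:M1}, which follows from \ref{item:E2} and reflexivity of $a$ (since $P(0,1) = P_a(0) \geq k$), Proposition~\ref{p:4} yields that $(X \rtimes_\alpha Y, a_P)$ is a $\cV$-monoid. It then remains to verify \ref{item:S1}--\ref{item:S3}; by Corollary~\ref{c:1} it suffices to check the bound $a \otimes b \leq c \leq \wlex$, which I would deduce directly from \ref{item:E0}--\ref{item:E2} by expanding $c = a_P$ via~\eqref{eq:21} and comparing suprema, using \ref{item:E0} for $c \leq \wlex$ and \ref{item:E1}, \ref{item:E2} together with \ref{item:M2} for $a \otimes b \leq c$. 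Finally I would confirm the two assignments are mutually inverse: starting from $(\alpha,P)$ and forming $a_P$, the recovered data is $(\alpha, P_{a_P}) = (\alpha, P)$ since $P_{a_P} = P$ by Proposition~\ref{p:4}; and starting from a split extension in $\vmon^*$, the assumption $c = a_{P_c}$ ensures the recovered $\cV$-relation agrees with the original.

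The main obstacle I expect is the bookkeeping in translating \ref{item:S1}--\ref{item:S3} into \ref{item:E0}--\ref{item:E2} and back. The subtle point is that in $\vmon^*$ the relations $a$, $b$, $c$ are not given directly but only through their positive cones $P_a$, $P_b$, $P$ via the supremum formula~\eqref{eq:21}, so the conditions \ref{item:S1}--\ref{item:S3} (which are stated as pointwise inequalities between $\cV$-relations) must be shown equivalent to the \emph{pointwise} cone conditions \ref{item:E0}--\ref{item:E2} rather than to some supremum-level statement. Establishing that the pointwise axioms on the cones suffice to recover the full relational inequalities $a \otimes b \leq c \leq \wlex$, and conversely that those inequalities force the pointwise cone conditions, is where the argument requires care; I would lean on the fact that $a = a_{P_a}$ and $b = b_{P_b}$ hold by hypothesis and that $c = a_P$ by construction, so every relational inequality can be reduced to a comparison of the defining suprema, which in turn reduces to the basepoint evaluations captured by \ref{item:E0}--\ref{item:E2}.
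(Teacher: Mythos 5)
Your overall architecture coincides with the paper's: reduce to the semidirect product via Theorem~\ref{t:1}; extract $P$ as the cone of $c$; identify \ref{item:E3} and \ref{item:E4} with \ref{item:M2} and \ref{item:M3} through Proposition~\ref{p:4}; obtain \ref{item:E0}--\ref{item:E2} from \ref{item:S1}--\ref{item:S3} evaluated at the basepoint $(0,1)$; and, in the converse direction, use Proposition~\ref{p:4} together with Corollary~\ref{c:1}, with $P_{a_P}=P$ giving mutual inverseness. The paper works with an abstract $(Z,c)$ and transports everything along the isomorphism $\varphi$ of Theorem~\ref{t:1}, verifying at the very end that $\varphi$ is an isomorphism of $\cV$-monoids when $X\rtimes_\alpha Y$ carries $a_P$; that is only a presentational difference from your ``up to isomorphism'' setup.

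There is, however, one step of your plan that fails as stated: deducing $a\otimes b\leq a_P$ from \ref{item:E1}, \ref{item:E2} ``together with \ref{item:M2}'' by comparing suprema. Expanding via~\eqref{eq:21} and the distributivity~\eqref{eq:7},
\[
a(x,x')\otimes b(y,y') \;=\; \bigvee\,\{\,P_a(w)\otimes P_b(v) \mid x' = w+x,\ y' = vy\,\},
\]
and a term of this supremum is bounded, using \ref{item:E2}, \ref{item:E1} and \ref{item:E3}, by
$P(w,1)\otimes P(0,v)\leq P\bigl((w,1)\star(0,v)\bigr)=P(w,v)$; but
$(w,v)\star(x,y)=(w+\alpha(v,x),\,vy)$, which in general differs from $(x',y')=(w+x,\,vy)$, so $P(w,v)$ is \emph{not} a term of the supremum $a_P((x,y),(x',y'))=\bigvee\{P(x'',y'')\mid (x',y')=(x'',y'')\star(x,y)\}$. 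The twisting by $\alpha$ is exactly the obstruction, and overcoming it needs \ref{item:E4} (right normality), not just \ref{item:M2}: either use \ref{item:E4} to commute $(0,v)$ past $(x,1)$, or --- as the paper does, and as is available to you since you have already established that $(X\rtimes_\alpha Y,a_P)$ is a $\cV$-monoid --- write
\[
a(x,x')\otimes b(y,y') \;=\; a_P((x,1),(x',1))\otimes a_P((0,y),(0,y')) \;\leq\; a_P\bigl((x,1)\star(0,y),\,(x',1)\star(0,y')\bigr) \;=\; a_P((x,y),(x',y')),
\]
using the $\cV$-functoriality of $\star$ (which rests on \ref{item:M3}) and the identity $(x,1)\star(0,y)=(x,y)$. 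A similar, smaller correction: for pairs with $y=y'=1$ the inequality $a_P\leq\wlex$ reads $a_P((x,1),(x',1))\leq a(x,x')$, which needs \ref{item:E2} and $a=a_{P_a}$, not \ref{item:E0}. With these repairs your argument goes through and is the paper's proof.
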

\begin{proof}
  Let $(X, a) \overset{k} {\hookrightarrow}(Z, c)
  \overset{p}{\underset{s}\rightleftarrows} (Y, b)$ be a $U$-Schreier split
  extension of $\cV$-monoids in the category $\vmon^*$, let $s$ be the
  section of~$p$, and let $q: Z \to X$ be the unique set map
  satisfying~\eqref{eq:2}. We consider the monoid action $\alpha$
  defined by $\alpha(y, x) = q(s(y) \star k(x))$. By
  Theorem~\ref{t:1}, we know that $\varphi: X \rtimes_\alpha Y \to Z$
  and $\psi: Z \to X \rtimes_\alpha Y$ defined by $\varphi(x, y) =
  k(x) \star s(y)$ and by $\psi(z) = (q(z), p(z))$, respectively, are
  mutually inverse monoid isomorphisms. We let $P: X \times Y \to V$
  be defined by
  \[P(x, y) = P_c(k(x) \star s(y)) = P_c(\varphi(x, y))\]
  and we claim that $(\alpha, P)$ is a $\cV$-enriched action of $(Y,
  b)$ on $(X, a)$.
  
  \ref{item:E0}: Since $p$ is a $\cV$-functor, we have
  \[P(x, y) = P_c(k(x)\star s(y)) \leq P_b(p(k(x) \star s(y))) =
    P_b(y),\]
  where the last equality uses that $\psi \circ \varphi$ is the
  identity map on~$X\times Y$.
  
  \ref{item:E1}: Since $s$ is a $\cV$-functor, we have
  \[P_b(y) \leq P_c(s(y)) = P(0, y).\]
  
  \ref{item:E2}: Since $k$ is the kernel of~$p$, we have
  \[P_a(x) = P_c(k(x)) = P(x, 1).\]

  For proving \ref{item:E3} and \ref{item:E4}, we use the fact that
  $(Z, c)$ belongs to $\vmon^*$ and thus, the function $P_c$ satisfies
  \ref{item:M2} and \ref{item:M3}.

  \ref{item:E3}: By~\ref{item:M2}, we have
  \begin{align*}
    P(x, y) \otimes P(x', y')
    & = P_c(k(x)\star s(y)) \otimes P_c(k(x')\star s(y')) \leq
      P_c(k(x)\star s(y) \star k(x')\star s(y'))
    \\ & =P_c(\varphi(x, y) \star \varphi(x', y')) = P_c(\varphi(x +
         \alpha(y, x'), yy'))
    \\ & = P(x + \alpha(y, x'), yy').
  \end{align*}

  \ref{item:E4}: By~\ref{item:M3}, for all $z, z_0 \in Z$, we have
  \[P_c(z) \leq \bigvee \{P_c(z') \mid z_0 \star z = z' \star z_0\}.\]
  Since $\varphi$ is a monoid isomorphism, this is equivalent to
  having
  \[P_c(\varphi(x, y)) \leq \bigvee \{P_c(\varphi(x', y')) \mid
    \varphi(x_0 + \alpha (y_0, x), y_0y) = \varphi(x' + \alpha(y',
    x_0), y'y_0)\}\]
  for all $(x, y), (x_0, y_0) \in X \times Y$, and having
  \[\varphi(x_0 + \alpha (y_0, x), y_0y) = \varphi(x' + \alpha(y',
    x_0), y'y_0)\]
  is equivalent to having
  \[x_0 + \alpha(y_0, x) = x' + \alpha(y', x_0) \quad\text{ and }\quad
    y_0 y = y'y_0.\] Thus, we have \ref{item:E4}.

  Conversely, let $(\alpha, P)$ be a $\cV$-enriched action of $(Y, b)$
  on $(X, a)$. By Proposition~\ref{p:4}, taking
  \[ a_P((x, y), (x', y')) = \bigvee \{P(x'', y'') \mid (x', y') =
    (x'', y'') \star (x, y)\},\]
  yields a $\cV$-monoid $(X \rtimes_\alpha Y, a_P)$ provided $P$
  satisfies properties \ref{item:M1}, \ref{item:M2}, and
  \ref{item:M3}. As already observed, \ref{item:M2} and \ref{item:M3}
  hold because so do \ref{item:E3} and \ref{item:E4}, respectively. To
  show \ref{item:M1}, we may use \ref{item:E2} and the fact that $P_a$
  satisfies \ref{item:M1}. Now, by Corollary~\ref{c:1}, to conclude
  that
  \[(X, a) \overset{\iota_1} {\hookrightarrow} (X \rtimes_\alpha Y,
    a_P) \overset{\pi_2}{\underset{\iota_2}\rightleftarrows} (Y, b)\]
  is a $U$-Schreier split extension of $\cV$-monoids, it suffices to show
  that $a \otimes b \leq a_P \leq \wlex$. Since $(X, a)$ and $(Y, b)$
  belong to $\vmon^*$, for all $x, x' \in X$ and $y, y' \in Y$, we
  have
  \begin{equation}
    a(x, x') = \bigvee \{P_a(x'') \mid x' = x'' + x\}
    \overset{\ref{item:E2}} = \bigvee \{P(x'', 1) \mid x' = x'' + x\}
    = a_P((x, 1), (x',1))\label{eq:22}
  \end{equation}
  and
  \[b(y, y') = \bigvee \{P_b(y'') \mid y' = y'' y\}
    \overset{\ref{item:E1}}\leq \bigvee \{P(0, y'') \mid y' = y'' y\}
    = a_P((0, y), (0, y')).\]
  Thus,
  \[a(x, x') \otimes b(y, y') \leq a_P((x, 1), (x',1)) \otimes a_P((0,
    y), (0, y')) \leq a_P((x,y) , (x', y')),\]
  where the last inequality holds because $(X \rtimes_\alpha Y, a_P)$
  is a $\cV$-monoid. This shows that $a \otimes b \leq a_P$. Now,
  by~\eqref{eq:22}, $a_P((x, 1), (x', 1)) = a(x, x')$ and, for
  arbitrary $y, y'$,
  \[a_P((x, y), (x', y')) = \bigvee \{P(x'', y'') \mid (x', y') =
    (x'', y'') \star (x, y)\} \overset{\ref{item:E0}} \leq \bigvee
    \{P_b(y'') \mid y' = y'' y\} = b(y, y'),\]
  which finishes showing that $a_P \leq \wlex$.
  
  It remains to check that the two correspondences just described are
  mutually inverse. It is easily seen that $P = P_{a_P}$. Thus, taking
  Theorem~\ref{t:1} into account, it remains to show that, for a
  $U$-Schreier split extension $(X, a) \overset{k} {\hookrightarrow}(Z, c)
  \overset{p}{\underset{s}\rightleftarrows} (Y, b)$, if $(\alpha, P)$
  is the corresponding $\cV$-enriched action, then the monoid
  isomorphisms $\varphi$ and $\psi$ define morphisms of $\cV$-monoids
  when $X \rtimes_\alpha Y$ is equipped with the $\cV$-relation
  $a_P$. Indeed, we have
  \begin{align*}
    a_P((x, y), (x',y'))
    & = \bigvee \{P(x'', y'') \mid (x', y') = (x'', y'') \star (x,
      y)\}
    \\ & = \bigvee \{P_c(\varphi(x'', y'')) \mid (x', y') = (x'', y'')
         \star (x, y)\}\quad \text{(by definition of $P$)}
    \\ & =  \bigvee \{P_c(\varphi(x'', y'')) \mid \varphi(x', y') =
         \varphi(x'', y'')  \star \varphi(x, y)\}\quad \text{(because
         $\varphi$ is a}
    \\ & \hspace{100mm}\text{monoid isomorphism)}
    \\ & = c(\varphi(x, y), \varphi(x', y'))\quad \text{(because $(Z,
         c)$ belongs to $\vmon^*$)}\popQED \qed
  \end{align*}
\end{proof}
We finish this section by noting that, as
in~\cite{Martins-FerreiraSobral21}, we could also have defined
morphisms of $U$-Schreier split extensions of $\cV$-monoids and of
$\cV$-enriched actions in the obvious way, thereby forming two
categories that could be proved to be equivalent by following the
ideas used in the proof of Theorem~\ref{t:2}. We do not include
details on this as we believe no further meaningful mathematical
knowledge of the structures involved would be added.

\section{The case of preordered monoids}\label{sec:preorderd-mon}

In this section, we analyze the results of Section~\ref{sec:vmon-star}
in the case of preordered monoids and compare them with those
of~\cite{Martins-FerreiraSobral21}.

Let $\cV = \two$, so that $\vmon$ can be identified with the category
of preordered monoids, and let $(X, a)$ be a $\two$-monoid. The
ensuing preorder on $X$ will be denoted by~$\leq_X$. A function $P : X
\to \two$ is uniquely determined by the subset $P^{-1}(\{\top\})
\subseteq X$ and, conversely, each subset $P \subseteq X$ uniquely
determines a function $X \to \two$. We will often abuse notation and
identify a subset $P \subseteq X$ with the function $P:X \to \two$ it
defines. As already mentioned, under this identification, $P_a$ as
defined in the previous section is the cone~$P_X$ of positive elements
of~$X$. Now, given a subset $P \subseteq X$, the $\two$-relation $a_P$
defined in~\eqref{eq:21} induces the preorder~$\leq_P$ on~$X$ given by
\[x \leq_P y \iff y \in P + x,\]
for all $x, y \in P$. Indeed, we have $x \leq_P y$ if, and only if,
$a_P(x, y) = \top$, which holds if, and only if, there exists some $w
\in X$ satisfying $P(w) = \top$ and $y = w + x$. We further observe
that $P \subseteq X$ satisfies properties \ref{item:M1} and
\ref{item:M2} if, and only if, $0 \in P$ and $x + y \in P$ whenever
$x, y \in P$, respectively. That is, if, and only if, $P$ is a
submonoid of~$X$. In turn, as it was already explained, requiring
\ref{item:M3} is equivalent to requiring that $P$ is right
normal. Thus, our Proposition~\ref{p:4} is a generalization
of~\cite[Proposition~2]{Martins-FerreiraSobral21} to the setting of
$\cV$-monoids. Moreover, the category ${\bf OrdMon}^*$ studied
in~\cite{Martins-FerreiraSobral21} is our category $\twomon^*$.  Let
us state Definition~\ref{sec:V-action} for $\cV = \two$.
\begin{proposition}\label{p:5}
  Let $X$ and $Y$ be objects of $\twomon^*$. A \emph{$\two$-enriched
    action} of $Y$ on $X$ is a pair $(\alpha, P)$, where $\alpha: Y
  \times X \to X$ is a monoid action and $P \subseteq X \times P_Y$
  satisfies the following axioms:
  \begin{enumerate}[label = (B.\arabic*)]\setcounter{enumi}{-1}
  \item\label{item:B0} $P \cap (X \times \{1\}) \subseteq P_X \times \{1\}$,
  \item\label{item:B1} $\{0\} \times P_Y \subseteq P$,
  \item\label{item:B2} $P_X \times \{1\} \subseteq P$,
  \item\label{item:B3} if $(x, y)$ and $(x', y')$ belong to $P$ then
    so does $(x + \alpha(y, x'), yy')$,
  \item\label{item:B4} if $(x, y) \in P$ and $(x_0, y_0) \in X \times
    Y$, then there exists $(x', y') \in P$ such that $x_0 +
    \alpha(y_0, x) = x' + \alpha(y', x_0)$ and $y_0 y = y'y_0$.
  \end{enumerate}
\end{proposition}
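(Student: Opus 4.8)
The plan is to show that Proposition~\ref{p:5} is simply the instantiation of Definition~\ref{sec:V-action} in the case $\cV = \two$, using throughout the dictionary that identifies a function $P : X \times Y \to \two$ with the subset $P^{-1}(\{\top\}) \subseteq X \times Y$. Under this identification, a condition of the form $v \leq w$ between $\two$-valued expressions becomes the implication ``if $v = \top$ then $w = \top$'', and $\otimes = \wedge$ becomes conjunction. Since $k = \top$ in $\two$, the cone $P_b = P_Y$ and $P_a = P_X$ are the respective cones of positive elements, as was already noted in the preceding discussion. So the proof amounts to translating each axiom \ref{item:E0}--\ref{item:E4} into its set-theoretic counterpart \ref{item:B0}--\ref{item:B4} and checking they match; I would present these translations in order.

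First I would dispose of the ambient typing. Axiom~\ref{item:E0} reads $P(x, y) \leq P_b(y)$ for all $(x, y)$; in $\two$ this says that whenever $(x, y) \in P$ we have $y \in P_Y$, i.e.\ $P \subseteq X \times P_Y$, which is exactly the domain restriction asserted in the statement of Proposition~\ref{p:5}, together with its specialization at $y = 1$ giving \ref{item:B0}. Indeed, evaluating \ref{item:E0} at an element with $y = 1$ and combining with $P_b(1) = \top$ shows nothing beyond $(x,1) \in P \Rightarrow 1 \in P_Y$, which holds trivially; the genuine content of \ref{item:B0} as stated, namely $P \cap (X \times \{1\}) \subseteq P_X \times \{1\}$, is better read off together with~\ref{item:E2}, so I would note that \ref{item:E0} yields $P \subseteq X \times P_Y$ and that the first projection being into $P_Y$ is what lets us write $P \subseteq X \times P_Y$ as in the proposition.

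Next I would translate the remaining axioms one at a time. Axiom~\ref{item:E1}, $P_b(y) \leq P(0, y)$, becomes: if $y \in P_Y$ then $(0, y) \in P$, i.e.\ $\{0\} \times P_Y \subseteq P$, which is \ref{item:B1}. Axiom~\ref{item:E2}, the equality $P_a(x) = P(x, 1)$, becomes the biconditional $x \in P_X \iff (x, 1) \in P$; the direction $\Rightarrow$ gives $P_X \times \{1\} \subseteq P$, which is \ref{item:B2}, and the direction $\Leftarrow$ gives $P \cap (X \times \{1\}) \subseteq P_X \times \{1\}$, which is \ref{item:B0}. Thus the single $\cV$-valued equality \ref{item:E2} splits into the two inclusions \ref{item:B0} and \ref{item:B2}, while \ref{item:E0} is absorbed into the typing $P \subseteq X \times P_Y$. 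Axiom~\ref{item:E3}, namely $P(x, y) \otimes P(x', y') \leq P(x + \alpha(y, x'), yy')$, becomes: if $(x, y), (x', y') \in P$ then $(x + \alpha(y, x'), yy') \in P$, which is \ref{item:B3}. Finally \ref{item:E4}, which in $\two$ reads $P(x, y) \leq \bigvee\{P(x', y') \mid \cdots\}$, becomes: if $(x, y) \in P$ then there exists $(x', y') \in P$ with $x_0 + \alpha(y_0, x) = x' + \alpha(y', x_0)$ and $y_0 y = y' y_0$, which is exactly \ref{item:B4}.

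This is a routine but careful bookkeeping exercise, and I do not expect a genuine obstacle; the only subtle point, which I would flag explicitly, is the bookkeeping around \ref{item:E0} and \ref{item:E2}. One must be careful to see that the two separate $\two$-inclusions \ref{item:B0} and \ref{item:B2} together encode the single equality \ref{item:E2} and the typing condition from \ref{item:E0}, rather than double-counting or dropping content. Apart from this, since $\otimes = \wedge$ and suprema over singletons or two-element chains in $\two$ behave as expected, every translation is immediate, and the correspondence between the $\cV$-enriched actions of Definition~\ref{sec:V-action} and the $\two$-enriched actions of Proposition~\ref{p:5} follows.
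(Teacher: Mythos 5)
Your proof is correct and takes essentially the same route as the paper's: a direct dictionary translating each $\two$-valued axiom into its subset-theoretic counterpart, with (E.0) giving the typing condition $P \subseteq X \times P_Y$, (E.1) giving (B.1), (E.2) splitting into the two inclusions (B.0) and (B.2), and (E.$i$) giving (B.$i$) for $i = 3, 4$. Your bookkeeping around (E.0) and (E.2) is in fact slightly more careful than the paper's own proof, which as written attributes (B.0) and (B.2) to (E.1) rather than to (E.2) --- evidently a typo, since (E.1) is already accounted for by (B.1).
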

\begin{proof}
  This is a straightforward translation of
  Definition~\ref{sec:V-action}, with \ref{item:E0} corresponding to
  the condition $P \subseteq X \times P_Y$, \ref{item:B0} and
  \ref{item:B2} corresponding to \ref{item:E1} and, for $i = 1, 3, 4$,
  (B.$i$) corresponding to (E.$i$).
\end{proof}

Let us now recall the main result
of~\cite{Martins-FerreiraSobral21}. \emph{In loc. cit.}, preordered
actions are defined as follows:
\begin{definition}
  [{\cite[Definition
    4]{Martins-FerreiraSobral21}}]\label{sec:preaction} Let $X$ and
  $Y$ belong to $\twomon^*$. A \emph{preordered action} of $Y$ on
  $X$ is a pair $(\alpha, \xi)$, where $\alpha: Y \times X \to X$ is a
  monoid action and $\xi: X \times P_Y \to X$ is a function satisfying
  \begin{enumerate}[label = (A.\arabic*)]
  \item\label{item:A1} $\xi(0, y) = 0$, for all $y \in P_Y$,
  \item\label{item:A2} $\xi(x, 1) = x$, for all $x \in P_X$,
  \item\label{item:A3} if $\xi(x, y)= x$ and $\xi(x', y')= x'$, then
    $\xi(x + \alpha(y, x'), yy') = x + \alpha(y, x')$,
  \item\label{item:A4} if $(x, y) \in X \times P_Y$ and $(x_0, y_0)
    \in X \times Y$, then there exists $(x', y') \in X \times P_Y$
    such that $x_0 + \alpha(y_0, x) = x' + \alpha(y', x_0)$, $\xi(x',
    y') = x'$, and $y_0 y = y'y_0$.
  \end{enumerate}
\end{definition}

The intuitive idea behind this definition is that a preordered action
$(\alpha, \xi)$ of $X$ on $Y$ determines a preorder on $X \times Y$
that turns $X \rtimes_\alpha Y$ into a preordered monoid by specifying
its positive cone. More precisely, the function $\xi$ specifies the
cone of positive elements of $X \rtimes_\alpha Y$ by asserting that
$(x, y)$ is positive if, and only if, $\xi(x, y) = x$. The fact that
the domain of $\xi$ is $X \times P_Y$ yields that all positive
elements of $X \rtimes_\alpha Y$ belong to this set. Moreover, any
value taken by $\xi$ on a point $(x, y)$ that is different from~$x$ is
somehow irrelevant. Indeed, the authors define morphisms of preordered
actions as follows: if $X, X'$ and $Y, Y'$ are preordered monoids, and
$(\alpha, \xi)$ and $(\alpha', \xi')$ are preordered actions of $Y$ on
$X$ and of $Y'$ on $X'$, respectively, then a morphism from $(\alpha,
\xi)$ to $(\alpha', \xi')$ is pair $(f, g)$ such that $f: X \to X'$
and $g: Y \to Y'$ are monoid homomorphisms restricting and
co-restricting to the suitable positive cones, and that satisfy
\[f(\alpha(y, x)) = \alpha'(g(y), f(x)) \text{ and }\xi'(f(u), g(v)) =
  f(u),\]
for all $(x, y) \in X \times Y$, and $(u, v) \in X \times P_Y$ such
that $\xi(u, v) = u$. Therefore, if $(\alpha, \xi)$ and $(\alpha,
\xi')$ are preordered actions of $Y$ on $X$ such that, for all $x \in
X$ and $y \in P_Y$
\begin{equation}
  \xi(x, y) = x \iff \xi'(x, y) = x,\label{eq:12}
\end{equation}
then the pair $({\rm id}_X, {\rm id}_Y)$ consisting of the suitable
identity maps defines an isomorphism between $(\alpha, \xi)$ and
$(\alpha, \xi')$. We may thus identify two preordered actions
$(\alpha, \xi)$ and $(\alpha, \xi')$ whenever they
satisfy~\eqref{eq:12}.

We now compare the notions of $\two$-enriched action as in
Proposition~\ref{p:5} and preordered action as in
Definition~\ref{sec:preaction}.

\begin{proposition}
  Let $X$ and $Y$ belong to $\twomon^*$. Then, there is a one-to-one
  correspondence between $\two$-enriched actions of $Y$ on $X$ and
  preordered actions of $Y$ on $X$ determined by those $(\alpha, \xi)$
  that further satisfy
  \begin{enumerate}[label = (A.\arabic*)]\setcounter{enumi}{-1}
  \item \label{item:A0} if $\xi(x, 1) = x$, then $x \in P_X$, for all
    $x \in X$.
  \end{enumerate}
\end{proposition}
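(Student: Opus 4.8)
The plan is to establish a bijection between $\two$-enriched actions $(\alpha, P)$ and (equivalence classes of) preordered actions $(\alpha, \xi)$ satisfying \ref{item:A0}. The natural dictionary is to regard a $\two$-enriched action $P \subseteq X \times P_Y$ as encoding exactly the set of ``positive'' pairs, and to regard a preordered action via its fixed-point set $\{(x,y) \in X \times P_Y \mid \xi(x,y) = x\}$. So first I would define, given a $\two$-enriched action $(\alpha, P)$, a preordered action $(\alpha, \xi_P)$ by setting $\xi_P(x,y) = x$ whenever $(x,y) \in P$ and choosing $\xi_P(x,y)$ to be some fixed value different from $x$ otherwise (for instance, by first checking $X$ is nontrivial; if $X = \{0\}$ the statement is degenerate and handled separately). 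Conversely, given a preordered action $(\alpha, \xi)$ satisfying \ref{item:A0}, I would define $P_\xi = \{(x,y) \in X \times P_Y \mid \xi(x,y) = x\}$.

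Next I would verify that these assignments land in the correct classes. For the forward direction, I must check that $\xi_P$ satisfies \ref{item:A1}--\ref{item:A4} and \ref{item:A0}: axiom \ref{item:A1} follows from \ref{item:B1} (since $(0,y) \in P$ for $y \in P_Y$), axiom \ref{item:A2} from \ref{item:B2}, axiom \ref{item:A3} from \ref{item:B3}, axiom \ref{item:A4} from \ref{item:B4}, and \ref{item:A0} from \ref{item:B0}. Each of these is a direct translation using the defining equivalence $\xi_P(x,y)=x \iff (x,y)\in P$. For the backward direction, I must check that $P_\xi$ is a subset of $X \times P_Y$ (immediate from the domain of $\xi$) and satisfies \ref{item:B0}--\ref{item:B4}: \ref{item:B0} is exactly \ref{item:A0}, \ref{item:B1} comes from \ref{item:A1}, \ref{item:B2} from \ref{item:A2}, \ref{item:B3} from \ref{item:A3}, and \ref{item:B4} from \ref{item:A4}. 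Again these are one-to-one correspondences of the axioms.

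Finally I would confirm that the two assignments are mutually inverse, modulo the identification of preordered actions via~\eqref{eq:12}. Starting from $P$, forming $\xi_P$, and then taking $P_{\xi_P}$ recovers $P$ exactly, since $(x,y) \in P_{\xi_P} \iff \xi_P(x,y) = x \iff (x,y) \in P$. Starting from $(\alpha,\xi)$, forming $P_\xi$, and then $\xi_{P_\xi}$ gives back an action whose fixed-point set is $P_\xi$, hence one satisfying~\eqref{eq:12} with the original $\xi$, so the two are identified. Thus the correspondence is bijective on the appropriate classes.

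The main subtlety, which I would address carefully rather than treat as routine, is the role of condition \ref{item:A0} and the passage to equivalence classes. The key observation is that a $\two$-enriched action records only \emph{which} pairs are positive (the characteristic subset $P$), whereas a preordered action carries the extra, and by~\eqref{eq:12} irrelevant, data of the actual values of $\xi$ off its fixed-point set. The correspondence therefore cannot be bijective with \emph{all} preordered actions; condition \ref{item:A0} is precisely what forces the fixed-point set to lie inside $P_X \times \{1\}$ when $y = 1$, matching \ref{item:B0}, and without it a preordered action could have fixed points $(x,1)$ with $x \notin P_X$ that no $\two$-enriched action can produce. I would make explicit that the stated bijection is between $\two$-enriched actions and the quotient of the set of \ref{item:A0}-satisfying preordered actions by the relation~\eqref{eq:12}.
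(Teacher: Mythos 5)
Your proposal is correct and takes essentially the same route as the paper's proof: the same dictionary between $P$ and the fixed-point set of $\xi$, the same axiom-by-axiom translation of \ref{item:B0}--\ref{item:B4} into \ref{item:A0}--\ref{item:A4}, and the same identification of the round-trip $\xi'$ with the original $\xi$ via~\eqref{eq:12}. The only (cosmetic) difference is that the paper simply sets $\xi(x,y)=0$ off~$P$, which is automatically distinct from~$x$ there because \ref{item:B1} puts $(0,y)$ in $P$ for every $y\in P_Y$; so your per-point choice of a value different from~$x$, and the separate treatment of the case $X=\{0\}$, are unnecessary.
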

\begin{proof}
  The correspondence is as follows. If $(\alpha, P)$ is a
  $\two$-enriched action then $(\alpha, \xi)$ is a preordered action
  satisfying~\ref{item:A0}, where
  \[\xi(x, y) =
    \begin{cases}
      x, \text{ if $(x, y) \in P$,}
     \\ 0, \text{ else.}
    \end{cases}
  \]
  Conversely, if $(\alpha, \xi)$ is a preordered action that
  satisfies~\ref{item:A0} then, taking
  \[P = \{(x, y) \in X \times P_Y \mid \xi(x, y) = x\}\]
  defines a $\two$-enriched action $(\alpha, P)$. It is a routine
  computation to check that these two assignments are indeed
  well-defined and are mutually inverse. We highlight that, if we
  start with a preordered action $(\alpha, \xi)$ and $(\alpha, P)$ it
  the $\two$-enriched action it defines, then the preordered
  action $(\alpha, \xi')$ defined by $(\alpha, P)$ may not coincide
  with $(\alpha, \xi)$, but $(\alpha, \xi)$ and $(\alpha, \xi')$ do
  satisfy~\eqref{eq:12}.
\end{proof}
The main result of~\cite{Martins-FerreiraSobral21} states that, up to
isomorphism, there is a one-to-one correspondence between Schreier
split extensions of preordered monoids in $\vmon^*$ and preordered
actions. As we have just seen that the notions of \emph{preordered
  action} and of \emph{$\two$-enriched action} are slightly different,
the reader may now realize an apparent contradiction between this
result and our Theorem~\ref{t:2}. To understand what is happening, we
have to analyze the definition of Schreier split extension
of~\cite{Martins-FerreiraSobral21}. It is then the moment to introduce
yet a new category, which turns out to be isomorphic to $\bf
OrdMon^*$~\cite[Theorem~1]{Martins-FerreiraSobral21}.

\begin{definition}{\cite[Definition~2]{Martins-FerreiraSobral21}}
  A monomorphism of monoids $m: P \rightarrowtail M$ is \emph{right
    normal} if its image is a right normal submonoid of~$M$. The full
  subcategory of the category of monomorphisms of monoids is denoted
  by $\bf RNMono(Mon)$.
\end{definition}

The authors of~\cite{Martins-FerreiraSobral21} mostly work with the
category $\bf RNMono(Mon)$ rather than with $\bf OrdMon^*$, including
for defining Schreier split extensions.

\begin{definition}{\cite[Definition~3]{Martins-FerreiraSobral21}}
  A \emph{Schreier split epimorphism} in $\bf RNMono(Mon)$ is a
  diagram
  \begin{equation}\label{eq:5}
    \begin{tikzcd}
      P_X \arrow[rightarrowtail]{d} \arrow{r}{\overline{k}} & P_Z
      \arrow[rightarrowtail]{d} \arrow[yshift=3pt]{r}{\overline{p}} & P_Y
      \arrow[rightarrowtail]{d}  \arrow[yshift=-3pt]{l}{\overline{s}} \\
      X \arrow{r}{k} & Z \arrow[yshift=3pt]{r}{p} & Y
      \arrow[yshift=-3pt]{l}{s}
    \end{tikzcd}
  \end{equation}
  in which the lower row is a Schreier split epimorphism of monoids
  and the upper row consists if right normal submonoids, the positive
  cones $P_X$, $P_Z$, and $P_Y$, that turn $X$, $Z$, and $Y$ objects
  of $\bf OrdMon^*$. The morphisms $\overline{k}$, $\overline{p}$, and
  $\overline{s}$ are the corresponding restrictions.
\end{definition}

This definition obfuscates the real nature of the morphisms in $\bf
OrdMon^*$. Indeed, while a monoid homomorphism between preordered
monoids in $\bf OrdMon^*$ is monotone if, and only if, it preserves
the positive cone, the morphism $k$ in diagram~\eqref{eq:5}, although
monotone, is not the kernel of~$p$ in the category of preordered
monoids (nor in $\bf OrdMon^*$). If we want to ensure that the
Schreier split extension defined by a preordered action $(\alpha,
\xi)$ is such that $k$ is a kernel in the category of preordered
monoids, then we must require that $(\alpha, \xi)$ satisfies
\ref{item:A0}. Moreover, even in the case of $\cV$-groups, unlike
ours, the results of~\cite{Martins-FerreiraSobral21} are not
comparable with those
of~\cite{ClementinoMartins-FerreiraMontoli19,ClementinoRuivo2023,
  ClementinoMontoli21}, as shown by the next example.
\begin{example}
  Let $X$ and $Y$ be preordered groups and $\alpha: Y \times X \to X$
  be a group action. We consider the preorder~$\preceq$ on $X \times
  Y$ defined by
  \[(x, y) \preceq (x', y') \iff y \leq_Y y'.\]
  Then, $(X \rtimes_\alpha Y, \preceq)$ is a preordered group and
  \[ \begin{tikzcd} P_X \arrow[rightarrowtail]{d}
      \arrow{r}{\overline{\iota_1}} & P_{X \rtimes_\alpha Y}
      \arrow[rightarrowtail]{d}
      \arrow[yshift=3pt]{r}{\overline{\pi}_2} & P_Y
      \arrow[rightarrowtail]{d}  \arrow[yshift=-3pt]{l}{\overline{\iota}_2} \\
      X \arrow{r}{\iota_1} & X \rtimes_\alpha Y
      \arrow[yshift=3pt]{r}{\pi_2} & Y \arrow[yshift=-3pt]{l}{\iota_2}
    \end{tikzcd}\]
  is a Schreier split extension in $\bf RNMono(Mon)$. However, unless
  the preorder on~$X$ is trivial,
  \[X\overset{\iota_1} {\hookrightarrow} X \rtimes_\alpha Y
    \overset{\pi_2}{\underset{\iota_2}\rightleftarrows} Y\]
  is not a Schreier split extension of preordered groups in the sense
  of~\cite{ClementinoMartins-FerreiraMontoli19,
    ClementinoMontoli21}. Indeed, for all $x, x' \in X$ we have $(x,
  1) \preceq (x', 1)$. Thus, if there are some $x \nleq_X x'$, then
  $\iota_1[X]$ is not a submonoid of $X \rtimes_\alpha Y$, and
  therefore $k$ is not the kernel of $\pi_2$.
\end{example}
\footnotesize
\bibliographystyle{plain}

\end{document}